\numberwithin{equation}{section}
\newcommand{\IK}{\mathbb{K}}
\newcommand{\IN}{\mathbb{N}}
\newcommand{\IZ}{\mathbb{Z}}
\newcommand{\IR}{\mathbb{R}}
\newcommand{\IC}{\mathbb{C}}
\newcommand{\indic}{\mathbb{I}} 
\newcommand{\expect}{\mathbb{E}}
\newcommand{\proba}{\mathbb{P}}
\newcommand{\intd}{\text{d}}
\newcommand{\longlongrightarrow}{\xrightarrow{\hspace*{1cm}}}
\newcommand{\longlongmapsto}{\xmapsto{\hspace*{1cm}}}
\theoremstyle{plain}
\newtheorem{theorem}{Theorem}[section]
\theoremstyle{definition}
\newtheorem{definition}[theorem]{Definition}
\theoremstyle{plain}
\newtheorem{proposition}[theorem]{Proposition}
\theoremstyle{plain}
\theoremstyle{plain}
\newtheorem{lemma}[theorem]{Lemma}
\theoremstyle{remark}
\newtheorem{remark}[theorem]{Remark}
\theoremstyle{plain}
\theoremstyle{plain}
\theoremstyle{plain}
\theoremstyle{remark}
\begin{document}
\title{Frequently hypercyclic random vectors}
\date{}
\author{Kevin \textsc{Agneessens}\footnote{The author is a Research Fellow of the Fonds de la Recherche Scientifique-FNRS.}}
\maketitle

\begin{abstract}
We show that, under suitable conditions, an operator acting like a shift on some sequence space has a frequently hypercyclic random vector whose distribution is strongly mixing for the operator. This result will be applied to chaotic weighted shifts. We also apply it to every operator satisfying the Frequent Hypercyclicity Criterion, recovering a result of Murillo and Peris.
\end{abstract}

\section{Introduction}

Let $\IK$ be the set $\IR$ or $\IC$ and let $E$ be an F-space over $\IK$, that is, a completely metrizable topological vector space, and assume that $E$ is locally bounded or locally convex, see \cite{KaltonPeckRoberts1984}. Hypercyclicity is the main notion of Linear Dynamics: an operator $T : E\longrightarrow E$ that is, a continuous and linear map, is \emph{hypercyclic} if there exists a vector $x\in E$ whose orbit $\{T^n(x)\mid n\geq 0\}$ under $T$ is dense in $E$. Such a vector is called a \emph{hypercyclic vector} for $T$. An operator $T :E\longrightarrow E$ is \emph{chaotic} if it is hypercyclic and has a dense set of periodic points.

There are now plenty of known hypercyclic operators. For example, the multiples of the backward shift operator $\lambda B$ on $\ell^p$, $1\leq p<\infty$, or $c_0$, with $|\lambda|>1$, are hypercyclic, see \cite[Example 2.32]{Grosse-ErdmannManguillot2011}. Recall that the backward shift operator $B$ is defined as $B(e_n)=e_{n-1}$ for all $n\geq 1$ and $B(e_0)=0$, where $(e_n)_{n\geq 0}$ is the canonical basis. Another example is the differentiation operator $D$ on the space of entire functions $H(\IC)$ that is, $D(f)=f'$, $f\in H(\IC)$, see \cite[Example 2.35]{Grosse-ErdmannManguillot2011}. All of these operators are even chaotic.

Hypercyclicity of a vector $x$ means that its orbit visits each non-empty open set at least once, and therefore infinitely often. One can wish to quantify how often such a vector visits each non-empty open set. Recall that the \emph{lower density} of a set $A\subseteq \IN$ is the quantity $\underline{\text{dens}}(A):=\liminf_{N\to\infty}\frac{|A\cap\{0,\dots,N\}|}{N+1}$, where $\IN=\{0,1,2,\dots\}$.

\begin{definition}
Let $E$ be an F-space. An operator $T : E\longrightarrow E$ is \emph{frequently hypercyclic} if there exists some $x\in E$ such that, for every non-empty open set $U$ of $E$, the set $\{n\geq 0 \mid T^n(x)\in U\}$ has positive lower density. Such a vector is called a \emph{frequently hypercyclic vector} for $T$.
\end{definition}

The multiples of the backward shift $\lambda B$, $|\lambda|>1$, on $\ell^p$, $1\leq p<\infty$, or $c_0$ and the differentiation operator $D$ on $H(\IC)$ are all frequently hypercyclic, see \cite[Example 9.12]{Grosse-ErdmannManguillot2011} and \cite[Corollary 9.14]{Grosse-ErdmannManguillot2011}.

More information about Linear Dynamics can be found in \cite{BayartMatheron2009} and \cite{Grosse-ErdmannManguillot2011}.

Let $B_{w} : \ell^p \longrightarrow \ell^p$ be a weighted shift on $\ell^p$, $1\leq p<\infty$, where $w=(w_n)_{n\geq 1}$ is the sequence of weights. It is known that if $B_w$ is chaotic then the random vector $\sum_{n\geq 0}\frac {X_n}{w_1\dots w_n}e_n$ is almost surely frequently hypercyclic for $B_w$, where $(X_n)_{n\geq 0}$ is a sequence of independent non-constant Gaussian random variables, see \cite[Section 5.5.2]{BayartMatheron2009}, \cite[Section 7.1]{BayartMatheron2016}. Furthermore, this random vector also induces a strongly mixing Gaussian measure for $B_w$.

\begin{definition}
Let $(M,\mathcal{B},\mu)$ be a probability space.
A measurable map $T:M\longrightarrow M$ is \emph{measure-preserving} if $\mu(T^{-1}(A))=\mu(A)$ for every $A\in\mathcal{B}$.

If $T$ is measure-preserving then it is
\begin{enumerate}[label=(\roman*)]
\item \emph{ergodic} if for every $A\in\mathcal{B}$ such that $A=T^{-1}(A)$ then $\mu(A)\in\{0,1\}$,
\item \emph{strongly mixing} if $\lim_{n\to\infty}\mu(T^{-n}(A)\cap B)=\mu(A)\mu(B)$ for every $A,B\in\mathcal{B}$,
\item \emph{exact} if every $A\in\mathcal{B}$ belonging to $\bigcap_{n\geq 0}T^{-n}(\mathcal{B})$ satisfies $\mu(A)\in\{0,1\}$.
\end{enumerate}
\end{definition}

We remark that exactness implies strong mixing, and strong mixing implies ergodicity, see \cite[pp.\ 50, 87]{DajaniKalle2021}.

In \cite{Nikula2014}, Nikula proved that $\sum_{n\geq 0}\frac{X_n}{n!}e_n$ is almost surely frequently hypercyclic for the differentiation operator $D$ on the space $H(\IC)$ of entire functions, where the distribution of the independent and identically distributed (i.i.d) variables $(X_n)_{n\geq 0}$ satisfies some conditions and $(e_n)_{n\geq 0}$ is the sequence of monomials. In \cite{MouzeMunnier2014}, Mouze and Munnier relaxed the condition on the distribution. The result was also proved by Bayart and Matheron in \cite[Remark 2 after Proposition 8.1]{BayartMatheron2016} in the Gaussian case, and the random vector $\sum_{n\geq 0}\frac{X_n}{n!}e_n$ also induces a strongly mixing Gaussian measure for $D$.
As a last example, Mouze and Munnier proved in \cite[Theorem 1.3]{MouzeMunnier2021} that $\sum_{n\geq 0}X_ne_n$ is almost surely frequently hypercyclic for the so-called Taylor shift.

The aim of the paper is to generalize these results to very general chaotic weighted shifts and even to a larger class of operators. However, the sequence $(X_n)_{n\geq 0}$ might not be Gaussian.

\begin{theorem}\label{mainTheoremFHC}
Let $T : E\longrightarrow E$ be an operator and let $(u_n)_{n\in\IZ}$ be a sequence in $E$ such that $T(u_n)=u_{n-1}$ for every $n\in\IZ$ and $\emph{span}\{u_n\mid n\in\IZ\}$ is dense in $E$.
Let $X$ be a random variable with full support and let $(X_n)_{n\in\IZ}$ be a sequence of i.i.d copies of $X$.
Assume that there exists a sequence of positive numbers $(\delta_n)_{n\in\IZ}$ such that
\[
\sum_{n=-\infty}^{\infty}\proba\left(|X|\geq\delta_n\right)<\infty
\]
and the series $\sum_{n=-\infty}^{\infty}\delta_nu_n$ is unconditionally convergent in $E$.
Then the random vector
\[
v:=\sum_{n=-\infty}^{\infty}X_nu_n
\]
is almost surely well-defined and frequently hypercyclic for the operator $T$, and it induces a strongly mixing measure with full support for $T$.
If $u_n=0$ for all $n\leq -1$ then the measure is even exact for $T$.
\end{theorem}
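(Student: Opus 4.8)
The plan is to build everything on a single probabilistic mechanism. First I would check that $v=\sum_{n\in\IZ}X_nu_n$ is almost surely well-defined: by the Borel--Cantelli lemma, the hypothesis $\sum_n\proba(|X|\geq\delta_n)<\infty$ forces $|X_n|<\delta_n$ for all but finitely many $n$ almost surely, and on that event $\sum_n X_nu_n$ is dominated term-by-term (up to finitely many terms) by $\sum_n\delta_nu_n$, which is unconditionally convergent; hence $v$ converges unconditionally a.s.\ in $E$. Let $\mu$ denote the distribution of $v$ on $E$; I would record that $\mu$ has full support because $X$ has full support and $\overline{\mathrm{span}}\{u_n\}=E$ (approximating an arbitrary target by a finite combination of the $u_n$ and using independence plus full support of each $X_n$ to put the corresponding coordinates near the prescribed values while the tail is small by the Borel--Cantelli estimate).

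The key step is to show $\mu$ is $T$-invariant and strongly mixing. Here I would use the shift structure: write $v=\sum_{n\in\IZ}X_nu_n$ and note $T(v)=\sum_{n\in\IZ}X_nu_{n-1}=\sum_{n\in\IZ}X_{n+1}u_n$, so $T(v)$ has the same distribution as $v$ because $(X_{n+1})_{n\in\IZ}\overset{d}{=}(X_n)_{n\in\IZ}$ by i.i.d.-ness; thus $\mu$ is $T$-invariant. For strong mixing the cleanest route is via characteristic-functional-type generators: it suffices to prove $\mu(T^{-k}A\cap B)\to\mu(A)\mu(B)$ for $A,B$ in an algebra generating $\mathcal B$, and I would take cylinder-type sets determined by finitely many of the coordinate maps $x\mapsto$ ``$n$-th coefficient.'' For such sets, $T^{-k}A$ depends (through $T^k v=\sum_n X_{n+k}u_n$) only on $\{X_j : j\geq j_0+k\}$ for some fixed $j_0$, while $B$ depends only on $\{X_j : j\leq j_1\}$; once $k$ is large these index sets are disjoint, so $T^{-k}A$ and $B$ become independent and $\mu(T^{-k}A\cap B)=\mu(T^{-k}A)\mu(B)=\mu(A)\mu(B)$ exactly. (One has to be a little careful that cylinder sets in $E$ genuinely generate the Borel $\sigma$-algebra and that the coefficient functionals are well-defined $\mu$-a.s.; the unconditional convergence and separability of $E$ handle this.) When $u_n=0$ for $n\leq-1$, the vector $v$ is measurable with respect to $\{X_j : j\geq 0\}$, and $T^{-k}(\mathcal B)$ is then contained in the $\sigma$-algebra generated by $\{X_j : j\geq k\}$; the intersection $\bigcap_k T^{-k}(\mathcal B)$ lies in the tail $\sigma$-algebra of the i.i.d.\ sequence $(X_n)_{n\geq0}$, which is trivial by Kolmogorov's $0$--$1$ law, giving exactness.

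Finally I would deduce frequent hypercyclicity from strong mixing with full support. Since $\mu$ has full support, every non-empty open $U$ has $\mu(U)>0$; by the pointwise ergodic theorem applied to the measure-preserving (indeed ergodic) system $(E,\mathcal B,\mu,T)$, for $\mu$-a.e.\ $x$ the Cesàro averages $\frac1{N}\sum_{n<N}\indic_U(T^nx)$ converge to $\mu(U)>0$, so $\{n : T^nx\in U\}$ has positive (in fact full) density for $\mu$-a.e.\ $x$. Intersecting over a countable base of $E$ (using separability) yields a $\mu$-full set of frequently hypercyclic vectors; since $\mu$ is the distribution of $v$, the random vector $v$ is almost surely frequently hypercyclic for $T$.

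The main obstacle I anticipate is the measure-theoretic bookkeeping in the mixing argument: making precise that the coefficient functionals $x\mapsto x_n$ are well-defined on a $\mu$-full set, that the cylinder sets they generate form a generating algebra for the Borel $\sigma$-algebra of $E$ (so that checking the mixing limit on them suffices), and that ``$T^{-k}A$ depends only on the tail $\{X_j:j\geq j_0+k\}$'' is literally true at the level of $\sigma$-algebras rather than just heuristically. Everything else --- the a.s.\ convergence, $T$-invariance, full support, and the passage from ergodicity to frequent hypercyclicity --- is comparatively routine.
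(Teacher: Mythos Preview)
Your outline is largely faithful to the paper's proof: the Borel--Cantelli argument for almost sure convergence, the full-support argument via independence and the tail estimate, the shift computation for $T$-invariance, Kolmogorov's $0$--$1$ law for exactness, and the passage from ergodicity to frequent hypercyclicity via Birkhoff's theorem are all exactly what the paper does.

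The genuine gap is the one you flagged yourself, and it is not a mere bookkeeping issue. In the setting of the theorem $E$ is an abstract F-space and the $u_n$ are just vectors with $T(u_n)=u_{n-1}$ and dense linear span. There is no assumption that $(u_n)_{n\in\IZ}$ is a (Schauder or any other) basis, so ``coordinate functionals'' $x\mapsto x_n$ need not exist at all; the representation $\sum_n x_n u_n$ need not be unique (nothing forbids nontrivial relations among the $u_n$), and even $\mu$-a.e.\ uniqueness is unavailable. Consequently your cylinder sets in $E$ are not defined, and even if they were, there is no reason they should generate $\mathcal{B}(E)$. So the proposed route to strong mixing does not go through as stated.

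The paper avoids this entirely by testing mixing against bounded continuous $f,g:E\to\IR$ (dense in $L^2(E,\mu)$) and truncating the series: one approximates $g(v)$ by $g(\sum_{k\leq N}X_ku_k)$ and $f(T^n v)=f(\sum_k X_{k+n}u_k)$ by $f(\sum_{k\geq -N}X_{k+n}u_k)$; for $n>2N$ the two truncated quantities depend on disjoint blocks of the i.i.d.\ sequence and hence factor. This uses only continuity of $f,g$ and unconditional convergence---no coordinate functionals. An equivalent clean fix, closer in spirit to your idea, is to observe that $(\IK^{\IZ},\text{product law of }X,\text{shift})$ is strongly mixing and that $\Phi:(x_n)\mapsto\sum_n x_n u_n$ is a measurable intertwiner with $T$, so $\mu=\Phi_*(\text{product law})$ is strongly mixing as a factor; but note that this still works on the $\Omega$-side, not via functionals on $E$.
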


The next section is devoted to the proof of this result. In the third section, we deduce three important special cases of the theorem: we obtain conditions under which the desired random variable $X$ exists (Theorem \ref{existenceLawFrechetSpaceGen}), or can be chosen to be subgaussian (Theorem \ref{existenceSubgaussianFHC}) or Gaussian (Theorem \ref{existenceSubgaussianType}).

In the fourth section, these results will be applied to chaotic weighted shifts on very general sequence spaces. We will also give a new proof of a result of Murillo and Peris \cite{Murillo-ArcilaPeris2013} by showing that every operator satisfying the Frequent Hypercyclicity Criterion admits a strongly mixing invariant measure with full support, where we obtain a rather explicit construction of such a measure.

Throughout the paper, if nothing else is said, let $E$ be a locally bounded or locally convex separable F-space over $\IK=\IR$ or $\IC$. If the space $E$ is complex (resp.\ real), a random variable $X$ is assumed to take complex (resp.\ real) values. Every random variable considered will be defined on a probability space $(\Omega,\mathcal{A},\proba)$.

\section{Frequent hypercyclicity}\label{frequentHypercyclicity}

The aim of this section is to prove Theorem \ref{mainTheoremFHC}.
We begin with three lemmas.

\begin{lemma}[{\cite[Lemma 6.6]{FonsecaLeoni2007}}]\label{convMeas}
Let $(F,\mathcal{A})$ and $(G,\mathcal{B}(G))$ be two measurable spaces with $G$ a metric space and $\mathcal{B}(G)$ the $\sigma$-algebra of Borel sets of $G$.
Let $(f_n)_{n\geq 0}$ be a sequence of measurable maps $f_n : F\longrightarrow G$, $n\geq 0$.
Assume that $(f_n)_{n\geq 0}$ converges pointwise to a function $f : F\longrightarrow G$. Then $f$ is measurable.
\end{lemma}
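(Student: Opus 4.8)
The plan is to reduce the problem to checking preimages of generating sets: since $G$ is a metric space, both its open and its closed sets generate $\mathcal{B}(G)$, and the family $\{B \in \mathcal{B}(G) : f^{-1}(B) \in \mathcal{A}\}$ is always a $\sigma$-algebra. Hence it will suffice to prove that $f^{-1}(C) \in \mathcal{A}$ for every closed set $C \subseteq G$, and then invoke this $\sigma$-algebra argument to conclude that $f$ is measurable.

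The key device I would use to exploit the metric on $G$ is the distance function. For a fixed closed set $C \subseteq G$, the map $y \mapsto d(y, C)$ is Lipschitz, hence continuous and Borel measurable, so each composition $x \mapsto d(f_n(x), C)$ is an $\mathcal{A}$-measurable real-valued function, being the composition of the measurable map $f_n$ with a Borel function. From the pointwise convergence $f_n(x) \to f(x)$ and the continuity of $d(\cdot, C)$ I would then deduce $d(f_n(x), C) \to d(f(x), C)$ for every $x \in F$, so that $x \mapsto d(f(x), C)$ is a pointwise limit of real-valued measurable functions and is therefore itself measurable by the classical scalar version of the statement (which follows from the measurability of $\liminf$ and $\limsup$ of countably many real-valued measurable functions).

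With this in hand, I would recover $f^{-1}(C)$ as a level set: because $C$ is closed, $f(x) \in C$ holds if and only if $d(f(x), C) = 0$, so $f^{-1}(C) = \{x \in F : d(f(x), C) = 0\}$ is the preimage of $\{0\}$ under a measurable real-valued map and thus lies in $\mathcal{A}$. Combined with the first paragraph, this establishes measurability of $f$.

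The only genuinely delicate point is that $G$ is an arbitrary metric space rather than $\IR$, so one cannot apply $\liminf$ or $\limsup$ directly to the $G$-valued sequence $(f_n)$; the role of the distance function $d(\cdot, C)$ is precisely to transport the question into $\IR$, where the classical pointwise-limit result is available. A fully self-contained variant that avoids even the scalar case would instead express, for each open $U$,
\[ f^{-1}(U) = \bigcup_{k \geq 1} \bigcup_{m \geq 1} \bigcap_{n \geq m} f_n^{-1}\bigl(\{y \in G : d(y, G\setminus U) > 1/k\}\bigr), \]
where the inner sets are preimages of open sets under the measurable maps $f_n$; verifying this identity uses only $f_n(x) \to f(x)$ together with the equivalence $d(f(x), G \setminus U) > 0 \iff f(x) \in U$, and exhibits $f^{-1}(U)$ through countably many measurable operations, whence $f^{-1}(U) \in \mathcal{A}$.
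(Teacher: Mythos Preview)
Your argument is correct. Your self-contained alternative at the end is precisely the paper's approach in dual form: the paper works with an arbitrary closed set $C$ and writes
\[
f^{-1}(C)=\bigcap_{k\geq 1}\bigcup_{n_0\in\IN}\bigcap_{n\geq n_0} f_n^{-1}\big(\{y\in G\mid \text{dist}(y,C)<1/k\}\big),
\]
which is the complement of your identity applied to $U=G\setminus C$. Your primary route differs only in that you first invoke the scalar case to conclude that $x\mapsto d(f(x),C)$ is measurable and then read off $f^{-1}(C)$ as a level set; this is perfectly valid and arguably cleaner if one is willing to quote the real-valued result, whereas the paper's direct set-theoretic identity keeps the proof entirely self-contained without that appeal.
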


\begin{proof}
Since $\mathcal{B}(G)$ is the $\sigma$-algebra generated by the open subsets of $G$, it suffices to show that
$f^{-1}(C)\in\mathcal{A}$ for every closed subset $C$ of $G$.
So let $C\subseteq G$ be a closed subset of $G$.
It is easily verified that
\[
f^{-1}(C)=\bigcap_{k\geq 1}\bigcup_{n_0\in\IN}\bigcap_{n\geq n_0} f_n^{-1}\big(\{x\in X\mid \text{dist}(x,C)<1/k\}\big).
\]
Since $f^{-1}(C)$ can be written as countable unions and intersections of sets of $\mathcal{A}$, we have $f^{-1}(C)\in\mathcal{A}$.
\end{proof}

The proof of Lemma \ref{convDistribution} should already be known. A proof in the case of a Banach space can be found in \cite[Corollary E.1.17]{HytonenNeervenVeraarWeis2017}.

\begin{lemma}\label{convDistribution}
Let $F$ be a metric space. Let $(X_n)_{n\in\IN}$ and $(Y_n)_{n\in\IN}$ be two sequences of random variables with values in $F$ such that for every $n\in\IN$, $X_n$ and $Y_n$ have the same distribution. If $(X_n)_{n\in\IN}$ (resp.\ $(Y_n)_{n\in\IN}$) converges almost surely to $X$ (resp.\ $Y$) then the random variables $X$ and $Y$ have the same distribution.
\end{lemma}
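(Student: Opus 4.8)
The plan is to reduce the statement to two standard facts: that almost sure convergence of $F$-valued random variables forces weak convergence of their laws, and that a Borel probability measure on a metric space is determined by the integrals against it of bounded continuous functions. First I would note that $X$ and $Y$ are genuinely measurable, hence honest random variables, which is immediate from Lemma~\ref{convMeas}; so it makes sense to speak of their distributions at all.

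Next I would fix a bounded continuous function $f : F \longrightarrow \IR$. Since $X_n \to X$ almost surely and $f$ is continuous, $f(X_n) \to f(X)$ almost surely, and since $f$ is bounded, the dominated convergence theorem gives $\expect[f(X_n)] \to \expect[f(X)]$; the same argument applied to $(Y_n)$ gives $\expect[f(Y_n)] \to \expect[f(Y)]$. But $X_n$ and $Y_n$ have the same distribution, so $\expect[f(X_n)] = \expect[f(Y_n)]$ for every $n$, and passing to the limit yields $\expect[f(X)] = \expect[f(Y)]$ for every bounded continuous $f : F \longrightarrow \IR$.

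It then remains to deduce equality of the laws from this. For a closed set $C \subseteq F$ I would introduce the bounded Lipschitz (hence continuous) functions $f_k(x) := \max\{0,\, 1 - k\,\text{dist}(x,C)\}$ for $k \geq 1$; since $C$ is closed, $f_k$ decreases pointwise to $\indic_C$ as $k \to \infty$, so monotone convergence gives $\proba(X \in C) = \lim_{k} \expect[f_k(X)] = \lim_{k} \expect[f_k(Y)] = \proba(Y \in C)$. As the closed subsets of $F$ form a $\pi$-system generating $\mathcal{B}(F)$, Dynkin's $\pi$–$\lambda$ theorem upgrades this to $\proba(X \in A) = \proba(Y \in A)$ for all $A \in \mathcal{B}(F)$, which is exactly the assertion.

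I do not expect a serious obstacle. The only point requiring mild care is that $F$ is allowed to be an arbitrary metric space rather than a Polish one, so rather than quoting a packaged portmanteau theorem I prefer to carry out the explicit approximation of indicators of closed sets by bounded Lipschitz functions, which is valid on any metric space; everything else is dominated convergence and a monotone class argument.
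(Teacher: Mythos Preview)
Your proof is correct and follows essentially the same approach as the paper: first pass to the limit in $\expect[f(X_n)]=\expect[f(Y_n)]$ for bounded continuous $f$ via dominated convergence, then upgrade equality of these integrals to equality of the Borel laws by approximating indicators with bounded Lipschitz functions. The only minor difference is in that second step---the paper approximates indicators of open sets from below (after invoking outer regularity of Borel measures on metric spaces), whereas you approximate indicators of closed sets from above and finish with a $\pi$--$\lambda$ argument; your route is self-contained and avoids the regularity citation.
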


\begin{proof}
By assumption, for every bounded continuous function $h : F\longrightarrow\IR$ and every $n\in\IN$, we have $\expect(h(X_n))=\expect(h(Y_n))$. By taking the limit when $n$ goes to $\infty$, we get $\expect(h(X))=\expect(h(Y))$.

Now, let $A\in\mathcal{B}(F)$ and $\varepsilon>0$. There exists an open set $U\subseteq F$ containing $A$ such that $\proba(X\in U\setminus A)\leq\varepsilon$ and $\proba(Y\in U\setminus A)\leq\varepsilon$ by \cite[Proposition 18.3]{Coudene2016}. For all $k\geq 0$, define the bounded and continuous function $f_k:F\longrightarrow\IR$ by $f_k(x):=\min(1,k\text{dist}(x,F\setminus U))$, $x\in F$. By the Dominated Convergence Theorem, there exists $k\geq 0$ large enough such that $|\int_{\Omega}(\indic_U(X)-f_k(X))\intd\proba|\leq\varepsilon$ and $|\int_{\Omega}(\indic_U(Y)-f_k(Y))\intd\proba|\leq\varepsilon$. Therefore, $|\proba(X\in A)-\proba(Y\in A)|\leq 4\varepsilon$. Since $\varepsilon>0$ was arbitrary, we conclude that $\proba(X\in A)=\proba(Y\in A)$ for every $A\in\mathcal{B}(F)$, and $X$ and $Y$ have the same distribution.
\end{proof}

The following lemma is well-known.

\begin{lemma}\label{convIndependence}
Let $(X_n)_{n\in\IN}$ and $(Y_n)_{n\in\IN}$ be two sequences of real random variables such that for every $n\in\IN$, $X_n$ and $Y_n$ are independent. If $(X_n)_{n\in\IN}$ (resp.\ $(Y_n)_{n\in\IN}$) converges almost surely to $X$ (resp.\ $Y$) then the random variables $X$ and $Y$ are independent.
\end{lemma}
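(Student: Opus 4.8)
The plan is to prove Lemma \ref{convIndependence} directly from the definition of independence, using the characterization of independence via distribution functions together with the continuity of these functions at a co-countable set of points. First I would recall that two real random variables $U$ and $V$ are independent if and only if $\proba(U\leq s,\,V\leq t)=\proba(U\leq s)\proba(V\leq t)$ for all $s,t\in\IR$, and that it suffices to verify this identity for $s$ ranging over the set of continuity points of the distribution function $F_X$ of $X$ and $t$ over the continuity points of $F_Y$, since each distribution function has only countably many discontinuities and is right-continuous (so one recovers all $s,t$ by taking decreasing limits).

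Next I would fix continuity points $s$ of $F_X$ and $t$ of $F_Y$. Since $X_n\to X$ almost surely, the event $\{X\leq s\}$ differs from $\liminf_n\{X_n\leq s\}$ and from $\limsup_n\{X_n\leq s\}$ only on the null set where convergence fails together with the boundary $\{X=s\}$, which has probability zero by the choice of $s$; hence $\indic_{\{X_n\leq s\}}\to\indic_{\{X\leq s\}}$ almost surely, and likewise $\indic_{\{Y_n\leq t\}}\to\indic_{\{Y\leq t\}}$ almost surely. By bounded convergence, $\proba(X_n\leq s)\to\proba(X\leq s)$, $\proba(Y_n\leq t)\to\proba(Y\leq t)$, and $\proba(X_n\leq s,\,Y_n\leq t)=\expect(\indic_{\{X_n\leq s\}}\indic_{\{Y_n\leq t\}})\to\proba(X\leq s,\,Y\leq t)$. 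Passing to the limit in the identity $\proba(X_n\leq s,\,Y_n\leq t)=\proba(X_n\leq s)\proba(Y_n\leq t)$, valid for every $n$ by independence of $X_n$ and $Y_n$, yields $\proba(X\leq s,\,Y\leq t)=\proba(X\leq s)\proba(Y\leq t)$ for all such $s,t$, and then for all $s,t\in\IR$ by right-continuity, so $X$ and $Y$ are independent.

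I do not expect any serious obstacle here, as the statement is genuinely routine; the only point requiring a little care is ensuring the almost-sure convergence of the indicator functions, which is exactly why one restricts to continuity points of the limiting distribution functions rather than attempting the argument for arbitrary $s,t$. An alternative, equally clean route would be to use characteristic functions: independence of $X_n,Y_n$ gives $\expect(e^{i(\xi X_n+\eta Y_n)})=\expect(e^{i\xi X_n})\expect(e^{i\eta Y_n})$ for all $\xi,\eta\in\IR$, and almost sure convergence plus dominated convergence (the integrands are bounded by $1$) lets one pass to the limit on both sides, giving $\expect(e^{i(\xi X+\eta Y)})=\expect(e^{i\xi X})\expect(e^{i\eta Y})$, which characterizes independence of $X$ and $Y$. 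This avoids the bookkeeping with continuity points entirely and is perhaps the shortest argument.
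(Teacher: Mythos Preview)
Your proposal is correct; both the distribution-function argument and the characteristic-function alternative are valid and standard. The paper itself does not give a proof of this lemma, merely stating that it is well-known, so there is nothing to compare against.
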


The proof of the first result relies on the Birkhoff Ergodic Theorem, see e.g.\ \cite[Theorem 1.14]{Walters1982}.

\begin{theorem}[Birkhoff's Ergodic Theorem]
Let $(M,\mathcal{B},\mu)$ be a probability space.
Let $T:M\longrightarrow M$ be a measure-preserving and ergodic map, and let $f\in L^1(M,\mu)$. Then
\[
\lim_{N\to\infty}\frac{1}{N+1}\sum_{n=0}^Nf(T^n(x))=\int_{M}f\intd\mu\text{ $\mu$-a.s.}
\]
\end{theorem}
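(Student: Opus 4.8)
The plan is to prove the theorem through the classical route via the Maximal Ergodic Theorem, following Garsia's telescoping argument, then to upgrade that maximal inequality to almost everywhere convergence, and finally to identify the limit using ergodicity.

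First I would establish the Maximal Ergodic Theorem: for $g\in L^1(M,\mu)$, writing $S_0:=0$ and $S_k:=\sum_{n=0}^{k-1}g\circ T^n$ for $k\geq 1$, and $F_N:=\max_{0\leq k\leq N}S_k\geq 0$, one has $\int_{\{F_N>0\}}g\intd\mu\geq 0$. The key observation is the telescoping inequality: for $1\leq k\leq N$ we have $S_k=g+S_{k-1}\circ T\leq g+F_N\circ T$, so on the set $A:=\{F_N>0\}$, where the maximum defining $F_N$ is attained at some index $\geq 1$ (recall $S_0=0$), this gives $F_N\leq g+F_N\circ T$, that is $g\geq F_N-F_N\circ T$. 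Integrating over $A$ and using $\int_A F_N\intd\mu=\int_M F_N\intd\mu$ (since $F_N$ vanishes off $A$) together with $\int_A F_N\circ T\intd\mu\leq\int_M F_N\circ T\intd\mu=\int_M F_N\intd\mu$ (measure preservation and nonnegativity) yields $\int_A g\intd\mu\geq 0$.

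Second I would deduce almost everywhere convergence of the averages $A_N f:=\frac{1}{N+1}\sum_{n=0}^N f\circ T^n$. Set $\overline f:=\limsup_N A_N f$ and $\underline f:=\liminf_N A_N f$; since $A_N f\circ T-A_N f=\frac{1}{N+1}(f\circ T^{N+1}-f)\to 0$ almost everywhere (which follows from $\frac{1}{N}f\circ T^N\to 0$ a.e., itself a consequence of $f\in L^1$, measure preservation and Borel--Cantelli), both functions are $T$-invariant. For rationals $\alpha<\beta$ the set $E_{\alpha,\beta}:=\{\underline f<\alpha<\beta<\overline f\}$ is $T$-invariant, so applying the Maximal Ergodic Theorem to $(f-\beta)\indic_{E_{\alpha,\beta}}$ and to $(\alpha-f)\indic_{E_{\alpha,\beta}}$ gives $\beta\,\mu(E_{\alpha,\beta})\leq\int_{E_{\alpha,\beta}}f\intd\mu\leq\alpha\,\mu(E_{\alpha,\beta})$, which forces $\mu(E_{\alpha,\beta})=0$. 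Taking the countable union over rationals shows $\overline f=\underline f=:f^\ast$ almost everywhere, so the limit exists a.e.

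Third I would identify the limit. By ergodicity the $T$-invariant function $f^\ast$ is almost everywhere equal to a constant $c$, and it remains to show $c=\int_M f\intd\mu$. Since $\int_M A_N f\intd\mu=\int_M f\intd\mu$ by measure preservation, it suffices to justify $\int_M f^\ast\intd\mu=\lim_N\int_M A_N f\intd\mu$. The main obstacle is precisely this interchange of limit and integral, since the convergence established above is only almost everywhere and a priori not dominated. I would resolve it by proving $L^1$-convergence of $A_N f$ to $f^\ast$ through truncation: splitting $f=h+g$ with $g$ bounded and $\|h\|_1$ arbitrarily small, handling the bounded part $g$ by dominated convergence, and controlling the remainder via the contraction $\|A_N h\|_1\leq\frac{1}{N+1}\sum_{n=0}^N\|h\circ T^n\|_1=\|h\|_1$. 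This shows $(A_N f)_N$ is Cauchy in $L^1$ with limit $f^\ast$, whence $\int_M f^\ast\intd\mu=\int_M f\intd\mu$, so $c=\int_M f\intd\mu$ and the proof is complete.
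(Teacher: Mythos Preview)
The paper does not prove Birkhoff's Ergodic Theorem at all: it is stated as a classical result with a reference to \cite[Theorem 1.14]{Walters1982} and then used as a black box in the proof of Proposition~\ref{invErgMeasureFHC}. There is therefore no ``paper's own proof'' to compare with.

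Your proposal is a correct, self-contained proof along the classical Garsia route (Maximal Ergodic Theorem $\Rightarrow$ a.e.\ convergence via the invariant sets $E_{\alpha,\beta}$ $\Rightarrow$ identification of the limit by $L^1$-convergence and ergodicity). The steps are standard and sound; the only places where a reader might want one more line are (i) the passage from the Maximal Ergodic inequality on $\{F_N>0\}$ to the inequality $\int_{E_{\alpha,\beta}}(f-\beta)\,\intd\mu\geq 0$, which requires letting $N\to\infty$ and using that $\{F_N>0\}\uparrow E_{\alpha,\beta}$ by $T$-invariance of $E_{\alpha,\beta}$, and (ii) in the $L^1$-convergence step, the bound $\|h^\ast\|_1\leq\|h\|_1$ via Fatou, which you use implicitly. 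Both are routine. For the purposes of this paper, however, a full proof is unnecessary: a citation suffices, exactly as the authors do.
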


The next result gives conditions under which the random vector $\sum_{n=-\infty}^{\infty}X_nu_n$ is almost surely frequently hypercyclic.

\begin{proposition}\label{invErgMeasureFHC}
Let $T: E\longrightarrow E$ be an operator and let $(u_n)_{n\in\IZ}$ be a sequence in $E$ such that $T(u_n)=u_{n-1}$ for every $n\in\IZ$.
Let $(X_n)_{n\in\IZ}$ be a sequence of i.i.d random variables defined on a probability space $(\Omega,\mathcal{A},\proba)$.
Assume that the random vector
\[
v:=\sum_{n=-\infty}^{\infty}X_nu_n
\]
is almost surely well-defined and $\proba(v\in O)>0$ for every non-empty open subset $O$ of $E$. Then $v$ is almost surely frequently hypercyclic for the operator $T$ and induces a strongly mixing measure with full support for $T$.
\end{proposition}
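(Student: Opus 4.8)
The strategy is to realize the distribution $\mu$ of $v$ on $E$ as an invariant measure for $T$, establish that $T$ is strongly mixing with respect to $\mu$, and then invoke Birkhoff's Ergodic Theorem to deduce almost-sure frequent hypercyclicity. First I would let $\mu$ denote the law of $v$ on the Borel $\sigma$-algebra of $E$; measurability of $v$ follows from Lemma \ref{convMeas} applied to the partial sums. The key structural observation is that since $T(u_n)=u_{n-1}$, we formally have $T(v)=\sum_{n}X_nu_{n-1}=\sum_{n}X_{n+1}u_n$, which has the same distribution as $v$ because the index shift $(X_n)\mapsto(X_{n+1})$ preserves the law of an i.i.d.\ sequence. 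I would make this rigorous by working with the truncated vectors $v_N:=\sum_{|n|\le N}X_nu_n$: one has $T(v_N)=\sum_{|n|\le N}X_nu_{n-1}$, which has the same distribution as $\sum_{-N+1\le n\le N+1}X_nu_n$; letting $N\to\infty$ and using Lemma \ref{convDistribution} (together with continuity of $T$) gives that $T(v)$ and $v$ have the same law, i.e.\ $\mu(T^{-1}(A))=\mu(A)$ for all Borel $A$. Thus $T$ is measure-preserving on $(E,\mathcal{B}(E),\mu)$, and $\mu$ has full support by the hypothesis $\proba(v\in O)>0$.

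Next I would prove strong mixing. The natural approach is to test against a convergence-determining class of functions, namely finite products of the form $x\mapsto\prod_j g_j(x_{i_j}^*)$ where the $x^*$ are continuous linear functionals (or, on a general F-space, using the continuous dual and the fact that cylinder-type functions of coordinates with respect to the $u_n$ generate the Borel structure); more robustly, one shows $\int_E f\cdot(g\circ T^n)\,\intd\mu\to\int_E f\,\intd\mu\int_E g\,\intd\mu$ for $f,g$ bounded continuous. Writing things in terms of $v$, this amounts to showing $\expect\big(f(v)\,g(T^n v)\big)\to\expect(f(v))\,\expect(g(v))$. The point is that $T^n v=\sum_{k}X_{k+n}u_k$ depends on the "shifted tail" of the i.i.d.\ sequence, while $f(v)$ depends on the whole sequence; one approximates $v$ by a finite truncation $v_N$ depending only on $(X_k)_{|k|\le N}$, and then for $n>2N$ the variables entering $g(T^n v_N)$ are $(X_{k+n})_{|k|\le N}$, which are independent of $(X_k)_{|k|\le N}$. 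By Lemma \ref{convIndependence} and the i.i.d.\ hypothesis, $f(v_N)$ and $g(T^n v_N)$ become independent with the correct marginals as $N\to\infty$ and $n\to\infty$; the truncation errors are controlled by almost-sure convergence $v_N\to v$ and continuity of $f$, $g$, and $T^n$ on the relevant event. This gives strong mixing; since strong mixing implies ergodicity, $T$ is ergodic for $\mu$.

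Finally, to get frequent hypercyclicity, fix a countable base $(O_j)_{j\ge1}$ of non-empty open sets of $E$ (possible since $E$ is separable metrizable), and for each $j$ pick a non-empty open $U_j$ with $\overline{U_j}\subseteq O_j$ and $\mu(U_j)>0$. Apply Birkhoff's Ergodic Theorem to the indicator $f=\indic_{U_j}\in L^1(E,\mu)$: for $\mu$-a.e.\ $x$,
\[
\lim_{N\to\infty}\frac{1}{N+1}\sum_{n=0}^{N}\indic_{U_j}(T^n x)=\mu(U_j)>0,
\]
so $\{n\ge0\mid T^n x\in U_j\}\subseteq\{n\ge0\mid T^n x\in O_j\}$ has lower density at least $\mu(U_j)>0$. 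Intersecting the full-measure sets over the countably many $j$, we find that $\mu$-a.e.\ $x$ is frequently hypercyclic; transporting back via the law of $v$, the vector $v$ is almost surely frequently hypercyclic for $T$. I expect the main obstacle to be the rigorous handling of the mixing step in the generality of an F-space: one must be careful that cylinder functions of the coordinates are measurable and generate enough of $\mathcal{B}(E)$, and that the truncation/independence argument can be carried out without Hilbert-space or Gaussian structure — the only tools available are Lemmas \ref{convMeas}, \ref{convDistribution}, \ref{convIndependence} and the unconditional/a.s.\ convergence of the series, so the approximation estimates must be arranged to use exactly those.
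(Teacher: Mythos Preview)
Your plan follows essentially the same route as the paper's proof: pass to the law $\mu$ of $v$, show $T$-invariance via shift-invariance of the i.i.d.\ sequence, prove strong mixing by a truncation/independence argument for bounded continuous test functions, and then conclude frequent hypercyclicity via Birkhoff applied to indicators of a countable base.

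There is, however, one genuine slip in your mixing step. You write that ``for $n>2N$ the variables entering $g(T^n v_N)$ are $(X_{k+n})_{|k|\le N}$''. As written this is false: $T^n$ is a deterministic linear map, so $T^n v_N=\sum_{|k|\le N}X_k u_{k-n}$ depends on exactly the same variables $(X_k)_{|k|\le N}$ as $v_N$ does, and hence $f(v_N)$ and $g(T^n v_N)$ are \emph{not} independent. The fix is to truncate $T^n v$ in the $u$-index rather than apply $T^n$ to the truncation: since $T^n v=\sum_{k}X_{k+n}u_k$, set $w_N^{(n)}:=\sum_{|k|\le N}X_{k+n}u_k$ (or, as the paper does, use the one-sided tails $\sum_{k\ge -N}X_{k+n}u_k$ and $\sum_{k\le N}X_k u_k$). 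For $n>2N$ the index sets $\{k+n:|k|\le N\}$ and $\{k:|k|\le N\}$ are disjoint, so $f(v_N)$ and $g(w_N^{(n)})$ are independent. The approximation error $\|g(T^n v)-g(w_N^{(n)})\|_{L^1(\proba)}$ must be controlled \emph{uniformly in $n$}; this is where Lemma~\ref{convDistribution} enters, rewriting it as $\|g(v)-g(v_N)\|_{L^1(\proba)}$, after which dominated convergence applies.

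Two minor remarks. The detour through $U_j$ with $\overline{U_j}\subseteq O_j$ in the Birkhoff step is unnecessary: applying Birkhoff directly to $\indic_{O_j}$ already gives lower density $\mu(O_j)>0$. And your closing worry about cylinder functions generating $\mathcal{B}(E)$ is a red herring: testing strong mixing on bounded continuous $f,g$ (dense in $L^2(E,\mu)$) suffices in any separable metric space, with no extra structure on $E$ needed.
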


\begin{proof}
We can assume that the series defining $v$ is convergent everywhere. Indeed, restrict the random variables $X_n$, $n\in\IZ$, to a subset of $\Omega$ of full measure on which the series defining $v$ converges. Hence we assume that the convergence is everywhere, and $v$ is measurable by Lemma \ref{convMeas}.

Define the probability measure
\[
\mu :
\mathcal{B}(E)\longlongrightarrow[0,1]
,\; A\longlongmapsto\proba(v\in A).
\]
In fact, the measure $\mu$ is the probability distribution of the random vector $v$.

First, we show that $\mu$ is $T$-invariant.
Let $A\in\mathcal{B}(E)$. By the definitions of $\mu$ and $v$ and continuity of $T$ we have
\[
\mu(T^{-1}(A))=\proba(T(v)\in A)
=\proba\left(\sum_{n=-\infty}^{\infty}X_nu_{n-1}\in A\right)\\
=\proba\left(\sum_{n=-\infty}^{\infty}X_{n+1}u_n\in A\right).
\]
Since $(X_n)_{n\in\IZ}$ is a sequence of i.i.d random variables, we have
\[
\proba\left(\sum_{n=-\infty}^{\infty}X_{n+1}u_{n}\in A\right)
=\proba\left(\sum_{n=-\infty}^{\infty}X_nu_n\in A\right)
\]
by Lemma \ref{convDistribution}. We conclude by definition of $\mu$ that $\mu(T^{-1}(A))=\proba(v\in A)=\mu(A)$. The measure $\mu$ is thus $T$-invariant.

Now we claim that $\mu$ is $T$-strongly mixing.
Let $f$ and $g$ be two bounded and continuous real-valued functions defined on $E$. We aim to show that $\lim_{n\to\infty}\int_E(f\circ T^n)g\intd\mu=\int_E f\intd\mu\int_E g\intd\mu$. Since the set of bounded continuous functions on $E$ is dense in $L^2(E,\mu)$ by \cite[Theorem 18.1]{Coudene2016}, this will imply the claim by \cite[p.\ 26]{Coudene2016}.
First, by definition of $\mu$, this is equivalent to showing that
\[
\lim_{n\to\infty}\int_{\Omega} f(T^n(v))g(v)\intd\proba=\int_{\Omega} f(v)\intd\proba\int_{\Omega} g(v)\intd\proba.
\]
Let $\varepsilon>0$.
By the Dominated Convergence Theorem and since $f$ and $g$ are continuous and bounded, there exists $N\geq 1$ such that
\begin{equation}\label{stronglyMixingIneqFHC1}
\Big\|g\left(\sum_{k=-\infty}^N X_ku_k\right)-g(v)\Big\|_{L^1(\Omega,\proba)}<\varepsilon
\end{equation}
and
\begin{equation}\label{stronglyMixingIneqFHC2}
\Big\|f\left(\sum_{k=-N}^{\infty} X_ku_k\right)-f(v)\Big\|_{L^1(\Omega,\proba)}<\varepsilon.
\end{equation}
Let $n>2N$. We have
\begin{align}\label{frequentHypercyclicitySMequality}
f(T^n(v))g(v)
&=
f(T^n(v))g(v)-
f(T^n(v))g\left(\sum_{k=-\infty}^N X_ku_k\right)\notag\\
&\quad+f(T^n(v))g\left(\sum_{k=-\infty}^N X_ku_k\right)-f\left(\sum_{k=-N}^{\infty}X_{k+n}u_k\right)g\left(\sum_{k=-\infty}^N X_ku_k\right)\notag\\
&\quad +f\left(\sum_{k=-N}^{\infty}X_{k+n}u_k\right)g\left(\sum_{k=-\infty}^N X_ku_k\right).
\end{align}
For the first two terms, using the assumption that $f$ is bounded and the inequality \eqref{stronglyMixingIneqFHC1} yields
\begin{align*}
\bigg|\int_{\Omega} f(T^n(v))g(v)\intd\proba
&-\int_{\Omega} f(T^n(v))g\left(\sum_{k=-\infty}^N X_ku_k\right)\intd\proba\bigg|\\
&\leq
\|f\|_{\infty}
\Big\|g\left(\sum_{k=-\infty}^N X_ku_k\right)-g(v)\Big\|_{L^1(\Omega,\proba)}
\leq\|f\|_{\infty}\varepsilon.
\end{align*}
Now, for the third and fourth terms, using the linearity and continuity of $T$,
\begin{align*}
\Bigg|\int_{\Omega}\Bigg[f(T^n(v))&g\left(\sum_{k=-\infty}^N X_ku_k\right)-f\left(\sum_{k=-N}^{\infty}X_{k+n}u_k\right)g\left(\sum_{k=-\infty}^N X_ku_k\right)\Bigg]\intd\proba
\Bigg|\\
&\quad\leq\|g\|_{\infty}
\left\|f\left(\sum_{k=-\infty}^{\infty}X_{k+n}u_k\right)-f\left(\sum_{k=-N}^{\infty}X_{k+n}u_k\right)
\right\|_{L^1(\Omega,\proba)}\\
&\quad =\|g\|_{\infty}
\left\|f\left(\sum_{k=-\infty}^{\infty}X_{k}u_k\right)-f\left(\sum_{k=-N}^{\infty}X_{k}u_k\right)
\right\|_{L^1(\Omega,\proba)}\\
&\quad \leq\|g\|_{\infty}\varepsilon,
\end{align*}
where we have used Lemma \ref{convDistribution} for the equality and \eqref{stronglyMixingIneqFHC2} for the last inequality.

For the last term of \eqref{frequentHypercyclicitySMequality}, since the random variables $X_n$, $n\in\IZ$, are i.i.d and $n>2N$, we have, by Lemma \ref{convIndependence} applied to $(f(\sum_{k=-N}^MX_{k+n}u_k))_{M\geq 1}$ and $(g(\sum_{k=-M}^{N}X_{k}u_k))_{M\geq 1}$ and then Lemma \ref{convDistribution} applied to $(f(\sum_{k=-N}^MX_{k+n}u_k))_{M\geq 1}$ and $(f(\sum_{k=-N}^{M}X_{k}u_k))_{M\geq 1}$,
\begin{align*}
&\int_{\Omega}f\bigg(\sum_{k=-N}^{\infty} X_{k+n}u_k\bigg)g\bigg(\sum_{k=-\infty}^N X_ku_k\bigg)\intd\proba\\
&=\int_{\Omega}f\bigg(\sum_{k=-N}^{\infty}X_{k+n}u_k\bigg)\intd\proba\int_{\Omega}g\bigg(\sum_{k=-\infty}^N X_ku_k\bigg)\intd\proba\\
&=\int_{\Omega}f\bigg(\sum_{k=-N}^{\infty}X_{k}u_k\bigg)\intd\proba\int_{\Omega}g\bigg(\sum_{k=-\infty}^N X_ku_k\bigg)\intd\proba.
\end{align*}
Therefore, using again \eqref{stronglyMixingIneqFHC1} and \eqref{stronglyMixingIneqFHC2} gives
\begin{align*}
\bigg|
&\int_{\Omega}f\left(\sum_{k=-N}^{\infty}X_{k+n}u_k\right)g\left(\sum_{k=-\infty}^N X_ku_k\right)\intd\proba
-\int_{\Omega} f(v)\intd\proba\int_{\Omega}g(v)\intd\proba\bigg|\\
&\leq
\|f\|_{\infty}
\Big\|g\left(\sum_{k=-\infty}^N X_ku_k\right)-g(v)\Big\|_{L^1(\Omega,\proba)}
+\|g\|_{\infty}
\Big\|f\left(\sum_{k=-N}^{\infty}X_ku_k\right)-f(v)\Big\|_{L^1(\Omega,\proba)}\\
&\leq\|f\|_{\infty}\varepsilon+\|g\|_{\infty}\varepsilon.
\end{align*}
We can finally conclude that
\begin{align*}
\left|\int_{\Omega}f(T^n(v))g(v)\intd\proba-\int_{\Omega}f(v)\intd\proba\int_{\Omega}g(v)\intd\proba\right|
\leq 2\|f\|_{\infty}\varepsilon+2\|g\|_{\infty}\varepsilon,
\end{align*}
and since $\varepsilon>0$ was arbitrary, $\lim_{n\to\infty}\int_{\Omega} f(T^n(v))g(v)\intd\proba=\int_{\Omega} f(v)\intd\proba\int_{\Omega} g(v)\intd\proba$.
The measure $\mu$ is thus $T$-strongly mixing.

Let $O$ be a non-empty open subset of $E$. The Birkhoff Ergodic Theorem can be applied to $T$ and $\mu$ and gives
\[
\lim_{N\to\infty}\frac{1}{N+1}\sum_{n=0}^N\mathbb{I}_{O}\circ T^n=\mu(O)\text{ $\mu$-a.s}.
\]
Let $A$ be a Borel subset of $E$ such that $\mu(A)=1$ and the previous equality holds everywhere on $A$. Then, if $B:=v^{-1}(A)\subseteq\Omega$, we have $\proba(B)=\proba(v^{-1}(A))=\mu(A)=1$ and
\[
\lim_{N\to\infty}\frac{1}{N+1}\sum_{n=0}^N\mathbb{I}_{O}\circ T^n(v)=\proba(v\in O)>0
\]
on $B$. Since $E$ is a separable F-space, we can take a countable base of open subsets of $E$ and get that almost surely, $\{n\geq 0\mid T^n(v)\in O\}$ has positive lower density for every non-empty open subset $O$ of $E$. The random vector $v$ is therefore almost surely frequently hypercyclic for the operator $T$.
\end{proof}

\begin{remark}
In fact, if $T$ admits an invariant and ergodic probability measure $\mu$ with full support then $T$ is frequently hypercyclic on $E$. This result is well-known, see e.g.\ \cite[Proposition 3.12]{BayartGrivaux2006}.
\end{remark}

If $u_n=0$ for all $n\leq-1$ in Proposition \ref{invErgMeasureFHC} then the measure induced by $v$ is even exact for $T$.

\begin{proposition}\label{mainTheoremFHCExactness}
Let $T : E\longrightarrow E$ be an operator and let $(u_n)_{n\in\IN}$ be a sequence in $E$ such that $T(u_n)=u_{n-1}$ for every $n\geq 1$ and $T(u_0)=0$.
Let $(X_n)_{n\in\IN}$ be a sequence of i.i.d random variables.
Assume that the random vector
\[
v:=\sum_{n=0}^{\infty}X_nu_n
\]
is almost surely well-defined and $\proba(v\in O)>0$ for every non-empty open subset $O$ of $E$. Then $v$ is almost surely frequently hypercyclic for the operator $T$ and induces an exact measure with full support for $T$.
\end{proposition}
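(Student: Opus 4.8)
The plan is to follow the proof of Proposition~\ref{invErgMeasureFHC} almost verbatim, the only new ingredient being the upgrade from strong mixing to exactness, which will come from Kolmogorov's zero--one law. As there, we first restrict the random variables $X_n$ to a subset of $\Omega$ of full measure on which the series defining $v$ converges, so that $v$ is measurable by Lemma~\ref{convMeas}; we then let $\mu\colon\mathcal{B}(E)\to[0,1]$, $A\mapsto\proba(v\in A)$, be the distribution of $v$. The key identity is that, iterating $T(u_k)=u_{k-1}$ and $T(u_0)=0$ (so $T^n(u_k)=u_{k-n}$ for $k\geq n$ and $T^n(u_k)=0$ otherwise), continuity and linearity of $T$ give
\[
T^n(v)=\sum_{k=0}^{\infty}X_k\,T^n(u_k)=\sum_{m=0}^{\infty}X_{m+n}\,u_m\qquad(n\geq 0),
\]
where the right-hand series converges everywhere since it equals $T^n(v)$, and its partial sums $\sum_{m=0}^{M}X_{m+n}u_m$ are $\sigma(X_n,X_{n+1},\dots)$-measurable.

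Taking $n=1$ shows $T(v)=\sum_{m\geq 0}X_{m+1}u_m$; since $(X_n)_{n\in\IN}$ is i.i.d., $(X_{m+1})_{m\geq 0}$ has the same law as $(X_m)_{m\geq 0}$, so $T(v)$ and $v$ have the same distribution by Lemma~\ref{convDistribution} (applied to the everywhere convergent partial sums). Hence $\mu(T^{-1}(A))=\proba(T(v)\in A)=\proba(v\in A)=\mu(A)$ for every $A\in\mathcal{B}(E)$, so $\mu$ is $T$-invariant, and $\mu$ has full support because $\mu(O)=\proba(v\in O)>0$ for each non-empty open $O\subseteq E$ by hypothesis.

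The main step is exactness. Let $A\in\bigcap_{n\geq 0}T^{-n}(\mathcal{B}(E))$ and, for each $n\geq 0$, pick a Borel set $B_n\subseteq E$ with $A=T^{-n}(B_n)$. Then
\[
v^{-1}(A)=\{\omega\in\Omega:v(\omega)\in T^{-n}(B_n)\}=\{\omega\in\Omega:T^n(v(\omega))\in B_n\}=(T^n\circ v)^{-1}(B_n).
\]
By the displayed formula, $T^n\circ v$ is the everywhere-pointwise limit of the $\sigma(X_n,X_{n+1},\dots)$-measurable maps $\sum_{m=0}^{M}X_{m+n}u_m$, hence is itself $\sigma(X_n,X_{n+1},\dots)$-measurable by Lemma~\ref{convMeas}. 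Therefore $v^{-1}(A)\in\sigma(X_n,X_{n+1},\dots)$ for every $n\geq 0$, i.e.\ $v^{-1}(A)$ belongs to the tail $\sigma$-algebra of the independent sequence $(X_n)_{n\in\IN}$; by Kolmogorov's zero--one law, $\proba(v^{-1}(A))\in\{0,1\}$, and since $\mu(A)=\proba(v^{-1}(A))$ we conclude $\mu(A)\in\{0,1\}$. Thus $\mu$ is exact for $T$.

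Finally, since exactness implies ergodicity, $T$ is a measure-preserving ergodic transformation of $(E,\mathcal{B}(E),\mu)$ and $\mu$ has full support; that $v$ is almost surely frequently hypercyclic for $T$ then follows exactly as in the last paragraph of the proof of Proposition~\ref{invErgMeasureFHC} --- apply Birkhoff's Ergodic Theorem to $\mathbb{I}_O$, pull the resulting full-measure set back to $\Omega$ along $v$, and use a countable base of open subsets of $E$ --- or, alternatively, directly from the Remark following that proposition. The only point requiring genuine care is the exactness argument above, namely exhibiting $v^{-1}(A)$ as a tail event of $(X_n)_{n\in\IN}$ via the representation $T^n(v)=\sum_{m\geq 0}X_{m+n}u_m$ and the measurability Lemma~\ref{convMeas}; everything else is a transcription of the two-sided argument.
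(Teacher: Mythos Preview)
Your proof is correct and follows essentially the same approach as the paper: the core exactness argument---showing that $\{v\in A\}=\{T^n(v)\in B_n\}=\{\sum_{m\geq 0}X_{m+n}u_m\in B_n\}\in\sigma(X_n,X_{n+1},\dots)$ for every $n$ and then invoking Kolmogorov's zero--one law---is identical to the paper's. The only cosmetic difference is that the paper's proof records just the exactness step and tacitly defers invariance, full support, and frequent hypercyclicity to Proposition~\ref{invErgMeasureFHC} (with $u_n=0$ for $n\leq -1$), whereas you redo those parts explicitly and bypass strong mixing via exactness $\Rightarrow$ ergodicity; this is a harmless and arguably cleaner packaging.
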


\begin{proof}
Let $A\in \bigcap_{n\geq 0}T^{-n}(\mathcal{B}(E))$. We claim that $\proba(v\in A)\in\{0,1\}$.

Let $n\geq 0$, there exists $B\in\mathcal{B}(E)$ such that $A=T^{-n}(B)$. We then have, using Lemma \ref{convMeas},
\begin{align*}
\{v\in A\}&=\{T^n(v)\in B\}
=\left\{T^n\Big(\sum_{k=0}^{\infty}X_ku_k\Big)\in B\right\}\\
&=\left\{\sum_{k=n}^{\infty}X_ku_{k-n}\in B\right\}
=\left\{\sum_{k=0}^{\infty}X_{n+k}u_k\in B\right\}
\in\sigma(X_n,X_{n+1}\dots).
\end{align*}
We conclude by Kolmogorov's 0-1 law.
\end{proof}

By Propositions \ref{invErgMeasureFHC} and \ref{mainTheoremFHCExactness}, in order to prove Theorem \ref{mainTheoremFHC}, it remains to show that the series $v=\sum_{n\in\IZ}X_nu_n$ converges almost surely and the probability on $E$ induced by $v$ has full support. We first need a lemma.

\begin{lemma}[{\cite[Theorem 15.5]{Rudin1987}}]\label{seriesProdConv}
Let $(x_n)_{n\geq 1}$ be a sequence of positive numbers such that $\sum_{n\geq 1}x_n$ converges and $x_n<1$ for all $n\geq 1$. Then $\prod_{n\geq 1}(1-x_n)>0$.
\end{lemma}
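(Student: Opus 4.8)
The statement to prove is Lemma~\ref{seriesProdConv}: if $(x_n)_{n\geq 1}$ are positive, $\sum_{n\geq 1}x_n$ converges, and $x_n<1$ for all $n$, then $\prod_{n\geq 1}(1-x_n)>0$.

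My plan is to take logarithms and reduce the convergence of the product to the convergence of a series. Since $0<x_n<1$, each factor $1-x_n$ lies in $\intervaloo{0}{1}$, so $\log(1-x_n)$ is a well-defined negative number and the partial products $P_N:=\prod_{n=1}^N(1-x_n)$ form a decreasing sequence of positive numbers, hence converge to some limit $P\in\intervalcc{0}{1}$. The content of the lemma is that $P>0$, equivalently that $\log P=\sum_{n\geq 1}\log(1-x_n)>-\infty$, i.e.\ the series $\sum_{n\geq 1}\log(1-x_n)$ converges (rather than diverging to $-\infty$).

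The key analytic input is the elementary inequality $\log(1-t)\geq -2t$ valid for $0\leq t\leq 1/2$ (or any similar one-sided bound $\log(1-t)\geq -Ct$ on a neighbourhood of $0$; concretely, $\frac{-\log(1-t)}{t}\to 1$ as $t\to 0^+$, so this ratio is bounded on $[0,1/2]$). Since $\sum_n x_n$ converges, we have $x_n\to 0$, so there is $N_0$ with $x_n\leq 1/2$ for all $n\geq N_0$. For $n\geq N_0$ we then get $0\leq -\log(1-x_n)\leq 2x_n$, and summing, $\sum_{n\geq N_0}(-\log(1-x_n))\leq 2\sum_{n\geq N_0}x_n<\infty$. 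Thus $\sum_{n\geq 1}\log(1-x_n)$ is a sum of a finite initial segment plus a convergent series, hence converges to a finite value; exponentiating gives $P=\exp\!\big(\sum_{n\geq 1}\log(1-x_n)\big)>0$.

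There is essentially no serious obstacle here --- this is a classical fact (indeed the excerpt cites it as \cite[Theorem 15.5]{Rudin1987}). The only point requiring a little care is that the inequality $\log(1-t)\geq -2t$ holds only for $t$ bounded away from $1$, which is why one must pass to a tail where $x_n$ is small; the hypothesis $\sum x_n<\infty$ delivers exactly that. One could alternatively avoid logarithms entirely and argue by the elementary bound $\prod_{n=1}^N(1-x_n)\geq 1-\sum_{n=1}^N x_n$ on a tail where $\sum_{n\geq N_0}x_n<1$ (which holds since the series converges), giving $\prod_{n\geq N_0}(1-x_n)\geq 1-\sum_{n\geq N_0}x_n>0$ directly, and then multiplying back the finitely many positive factors $\prod_{n=1}^{N_0-1}(1-x_n)>0$; I would mention this as the shortest route.
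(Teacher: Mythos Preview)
Your proof is correct; both the logarithmic argument and the elementary tail bound $\prod_{n\geq N_0}(1-x_n)\geq 1-\sum_{n\geq N_0}x_n$ are standard and work without issue. The paper does not actually supply its own proof of this lemma: it merely states it with a citation to \cite[Theorem 15.5]{Rudin1987} and uses it as a black box, so there is nothing further to compare.
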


The proof of Theorem \ref{mainTheoremFHC} uses some ideas from the proof of Theorem 2.3 of Mouze and Munnier \cite{MouzeMunnier2014}. In particular, the idea of the condition on the distribution of the random variable $X$ comes from that theorem.

\begin{proof}[Proof of Theorem \ref{mainTheoremFHC}]
Let $(\delta_n)_{n\in\IZ}$ be given by the assumption.
Since $\sum_{n\in\IZ}\proba\left(|X|\geq\delta_n\right)$ is a convergent series, it follows from the Borel-Cantelli lemma that
\[
\proba\bigg(\bigcup_{n_0\geq 1}\bigcap_{|n|\geq n_0}\Big\{|X_n|<\delta_n\Big\}\bigg)=1
\]
and hence, almost surely, $|X_n|<\delta_n$ for every $|n|$ large enough. Therefore, by the unconditional convergence of $\sum_{n\in\IZ}\delta_nu_n$, the random vector $v$ is almost surely well-defined, see \cite[Theorems 3.3.8 and 3.3.9]{KamthanGupta1981}.

By Propositions \ref{invErgMeasureFHC} and \ref{mainTheoremFHCExactness}, it remains to show that $\proba(v\in O)>0$ for every non-empty open subset $O$ of $E$.
It is enough to show this on a base of open subsets of $E$.

Let $\|.\|$ be an F-norm defining the topology of $E$, see \cite{KaltonPeckRoberts1984}, \cite[Definition 2.9]{Grosse-ErdmannManguillot2011}. Let $\eta>0$ and $y=\sum_{n=-d}^dy_nu_n\in E$. We shall prove that $\proba(v\in B_{\|.\|}(y,\eta))>0$, where $B_{\|.\|}(y,\eta)$ is the open ball for $\|.\|$ centred at $y$ and of radius $\eta$. Let $(\delta_n)_{n\in\IZ}$ be the sequence given by assumption. Since $\sum_{n\in\IZ}\delta_nu_n$ converges unconditionally, there exists an integer $N\geq d$ such that $\|\sum_{|n|\geq N+1}\alpha_nu_n\|<\eta/2$ whenever $|\alpha_n|\leq\delta_n$ for all $n\in\IZ$.
Define
\[
B:=\left\{
\Big\|\sum_{n=-N}^N(X_n-y_n)u_n\Big\|<\frac{\eta}{2}
\right\}
\subseteq\Omega
\]
and
\[
A:=B\cap\Big\{
|X_n|<\delta_n\text{ for all }|n|\geq N+1
\Big\},
\]
where $y_n=0$ if $d+1\leq |n|\leq N$.
By the triangle inequality we get on $A$
\begin{align*}
\|v-y\|
\leq\Big\|\sum_{n=-N}^N(X_n-y_n)u_n\Big\|
+\Big\|\sum_{|n|\geq N+1} X_nu_n\Big\|
&<\frac{\eta}{2}
+\frac{\eta}{2}
=\eta.
\end{align*}
This shows that $A\subseteq\{v\in B_{\|.\|}(y,\eta)\}$. Thus it suffices to prove that $\proba(A)>0$.
Since $(X_n)_{n\in\IZ}$ is i.i.d, we have
\[
\proba(A)=\proba(B)\prod_{|n|\geq N+1}\left(1-\proba\left(|X|\geq\delta_n\right)\right).
\]
Since $X$ has full support and $(X_n)_{n\in\IZ}$ is i.i.d, $\proba(B)>0$.
By Lemma \ref{seriesProdConv}, the product is positive since the series $\sum_{n\in\IZ}\proba(|X|\geq\delta_n)$ converges and $X$ has full support.
\end{proof}

\section{Existence of a distribution}\label{existenceDistribution}
There still remains a question in Theorem \ref{mainTheoremFHC}: does there exist a random variable $X$ satisfying the condition on the distribution? We begin with a simple proposition.

\begin{proposition}\label{existenceDistributionV1}
Let $(\delta_n)_{n\geq 0}$ be a sequence of positive numbers. Then there exist a probability space $(\Omega,\mathcal{A},\proba)$ and a random variable $X :\Omega\longrightarrow\IK$ with full support and $\sum_{n\geq 0}\proba(|X|\geq \delta_n)<\infty$ if and only if $\lim_{n\to\infty}\delta_n=\infty$.
\end{proposition}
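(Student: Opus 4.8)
The plan is to prove both implications. For the ``only if'' direction, suppose such a random variable $X$ with full support exists and $\sum_{n\geq 0}\proba(|X|\geq\delta_n)<\infty$. I would argue by contradiction: if $\delta_n\not\to\infty$, there is a bounded subsequence $(\delta_{n_k})_k$, say $\delta_{n_k}\leq M$ for all $k$. Then $\proba(|X|\geq\delta_{n_k})\geq\proba(|X|\geq M)$ for all $k$. Since $X$ has full support, $\proba(|X|\geq M)=\proba(|X|\in\intervalco{M}{\infty})>0$ because $\intervaloo{M}{\infty}$ (or a suitable half-open set) is a nonempty open set meeting the support, hence has positive measure. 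Wait --- one must be slightly careful: full support means every nonempty open set has positive probability, and $\{z\in\IK\mid |z|>M\}$ is nonempty and open, so $\proba(|X|>M)>0$, and a fortiori $\proba(|X|\geq\delta_{n_k})\geq\proba(|X|>M)>0$ for infinitely many indices, contradicting convergence of the series (whose terms must tend to $0$). Hence $\delta_n\to\infty$.

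For the ``if'' direction, assume $\delta_n\to\infty$. The idea is to construct $X$ with a density (or distribution) that has full support on $\IK$ but very light tails --- light enough to beat the sequence $(\delta_n)$. Concretely, since $\delta_n\to\infty$, I can choose a strictly increasing function $\varphi:\intervalco{0}{\infty}\to\intervalco{0}{\infty}$, continuous and unbounded, such that $\varphi(\delta_n)\geq n^2$ (say) for all large $n$; this is possible because $\delta_n\to\infty$ lets us interpolate a function growing fast along the values $\delta_n$. Then I want a random variable with $\proba(|X|\geq t)\leq e^{-\varphi(t)}$ or some comparable decaying bound, so that $\sum_n\proba(|X|\geq\delta_n)\leq\sum_n e^{-\varphi(\delta_n)}<\infty$. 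A clean way to guarantee full support simultaneously is to take $X$ with a strictly positive density $f$ on all of $\IK$ (e.g. proportional to $e^{-g(|z|)}$ for a suitable increasing $g$); strict positivity of the density immediately gives full support, and choosing $g$ to grow fast enough controls the tail. For instance in the real case, one can let $f(x)=c\,e^{-h(|x|)}$ where $h$ is increasing, $h(0)$ finite, $h$ convex and $h(t)/\varphi(t)\to\infty$; normalizability holds since $h$ grows, and the tail bound $\proba(|X|\geq t)=\int_{|x|\geq t}f\leq C e^{-h(t)}$ follows from monotonicity of $h$, giving summability at the points $\delta_n$.

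The main obstacle --- really the only nontrivial point --- is making the interpolation in the ``if'' direction airtight: given only $\delta_n\to\infty$ (no monotonicity, no rate), one must produce a single increasing unbounded function $\varphi$ with $\varphi(\delta_n)$ growing fast, and this requires a small argument. The cleanest route: set $m_k:=\min_{n\geq k}\delta_n$, so that $(m_k)$ is nondecreasing and $m_k\to\infty$ (this uses $\delta_n\to\infty$), and note $\delta_n\geq m_n$ for all $n$. Now build $\varphi$ increasing with $\varphi(m_k)\geq k^2$ for all $k$ by piecewise-linear interpolation between the points $(m_k,k^2)$, padding to make it defined and increasing on all of $\intervalco{0}{\infty}$; then $\varphi(\delta_n)\geq\varphi(m_n)\geq n^2$. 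From here $\sum_n e^{-\varphi(\delta_n)}\leq\sum_n e^{-n^2}<\infty$, and choosing the density $\propto e^{-\varphi(|z|)}$ (after checking it normalizes, which holds since $\varphi\to\infty$) finishes the construction, with full support automatic from positivity of the density and the stated tail estimate following by monotonicity of $\varphi$. The remaining verifications (normalization, the tail integral bound) are routine.
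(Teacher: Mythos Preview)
Your ``only if'' direction is correct and essentially identical to the paper's argument.

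For the ``if'' direction, your overall strategy---replace $(\delta_n)$ by a monotone minorant $m_n$, build an increasing $\varphi$ with $\varphi(\delta_n)\geq n^2$, and take a density proportional to $e^{-\varphi(|z|)}$---is sound in spirit, but two of the verifications you label ``routine'' are in fact \emph{false} as stated. First, $\varphi\to\infty$ alone does not guarantee that $e^{-\varphi(|z|)}$ is integrable on $\IK$: if the $m_k$ grow very rapidly (say $m_k=2^{2^k}$), your piecewise-linear $\varphi$ grows only like $(\log\log r)^2$, and $\int e^{-(\log\log r)^2}\,dr=\infty$. Second, mere monotonicity of $\varphi$ does not imply the tail bound $\proba(|X|\geq t)\leq Ce^{-\varphi(t)}$: for instance $\varphi(r)=2\log r$ gives density $\propto r^{-2}$ on $\IR$ with tail of order $t^{-1}$, whereas $e^{-\varphi(t)}=t^{-2}$. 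Both issues are repaired by the same device: replace $\varphi$ by $\psi(r):=\varphi(r)+r^2$. Then $e^{-\psi(|z|)}\leq e^{-|z|^2}$ gives integrability, and since $\varphi$ is nondecreasing,
\[
\proba(|X|\geq t)=c\int_{|z|\geq t}e^{-\varphi(|z|)}e^{-|z|^2}\,dz\leq c\,e^{-\varphi(t)}\int_{\IK}e^{-|z|^2}\,dz,
\]
so $\sum_n\proba(|X|\geq\delta_n)\leq C\sum_n e^{-n^2}<\infty$ as you intended.

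The paper's construction avoids these analytic subtleties by being more combinatorial. After reducing (as you do) to a strictly increasing sequence $(\delta_n)$, it partitions $\IK$ into the annuli $U_k=B(0,\delta_k)\setminus B(0,\delta_{k-1})$ and takes the piecewise-constant density $\rho=\tfrac12\sum_{k\geq 0}2^{-k}\lambda(U_k)^{-1}\indic_{U_k}$. This places mass exactly $2^{-k-1}$ on each $U_k$, so both normalization and the tail sum $\sum_{n\geq 0}\proba(|X|\geq\delta_n)=\sum_{n\geq 0}\sum_{j>n}2^{-j-1}$ reduce to geometric series, with no integrability estimates needed. Your approach, once patched, has the mild advantage of producing a continuous density; the paper's has the advantage of a two-line computation.
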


\begin{proof}
It is easy to prove that if such a variable $X$ exists then $(\delta_n)_{n\in\IN}$ must converge to $\infty$. Indeed, assume that $(\delta_{n_k})_{k\geq 1}$ is bounded by some $M>0$ where $(n_k)_{k\geq 0}$ is increasing. Then $\sum_{k\geq 0}\proba(|X|\geq \delta_{n_k})\geq\sum_{k\geq 0} \proba(|X|\geq M)=\infty$ since $X$ has full support.

Now suppose that $\lim_{n\to\infty} \delta_n=\infty$.
By considering $\inf_{k\geq n} \delta_k$, $n\geq 0$, we can assume without loss of generality that $(\delta_n)_{n\geq 0}$ is non-decreasing. By replacing $\delta_n$ with $\delta_n-1/n$ and dropping some $\delta_n$, if necessary, we may also assume that $(\delta_n)_{n\geq 0}$ is a (strictly) increasing sequence of positive numbers.
Define $U_0=B(0,\delta_0)$ and for each $k\geq 1$, $U_k:=B(0,\delta_{k})\setminus B(0,\delta_{k-1})$ and set $m_k:=\lambda(U_k)$, $k\geq 0$, where $B(0,r)$ is the open ball in $\IK$ of center $0$ and radius $r$ and $\lambda$ is the Lebesgue measure on $\IK $.
Note that $(U_k)_{k\in\IN}$ is a partition of $\IK$.
Define
\[
\rho:=2^{-1}\sum_{k\geq 0}\frac{1}{2^km_k}\indic_{U_k}.
\]
Since
\[
\int_{\IK}\rho\intd\lambda=2^{-1}\sum_{k\geq 0}2^{-k}=1,
\]
$\rho$ is a density on $\IK$ and we consider the probability space $(\IK,\mathcal{B}(\IK),\rho\intd\lambda)$ and the random variable $X=\text{Id}_{\IK}$. It is obvious that $\int_{O}\rho\intd\lambda>0$ for every non-empty open set $O$ of $\IK$, and it remains to show that $\sum_{n\geq 0}\int_{\IK\setminus B(0,\delta_n)}\rho\intd\lambda<\infty$.

By using the definition of $\rho$, we get
\begin{align*}
\sum_{n\geq 0}\int_{\IK\setminus B(0,\delta_n)}\rho\intd \lambda
=\sum_{n\geq 0}\sum_{j\geq n+1}\int_{U_j}\rho\intd\lambda
&=2^{-1}\sum_{n\geq 0}2^{-n}=1.
\end{align*}
This shows that $\sum_{n\geq 0}\proba(|X|\geq \delta_n)$ converges.
\end{proof}

\begin{lemma}\label{positiveSequenceLemma}
Let $(e_n)_{n\geq 0}$ be a sequence in $E$.
For every sequence of scalars $(\varepsilon_n)_{n\geq 0}$ such that the series $\sum_{n\geq 0}\varepsilon_ne_n$ is unconditionally convergent, there exists a sequence of positive numbers $(\delta_n)_{n\geq 0}$ such that $\sum_{n\geq 0}\delta_ne_n$ is unconditionally convergent and $|\varepsilon_n|=o(\delta_n)$.
\end{lemma}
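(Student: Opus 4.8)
The plan is a block construction. First I would reduce the statement ``$\sum_n c_ne_n$ is unconditionally convergent'' to the bounded-multiplier tail condition: for every $\eta>0$ there is $M$ with $\|\sum_{n\in F}\alpha_ne_n\|<\eta$ for every finite $F\subseteq\{n\geq M\}$ and all scalars with $|\alpha_n|\leq|c_n|$. This equivalence holds in the F-spaces considered here and is the form of unconditional convergence already used in the proof of Theorem \ref{mainTheoremFHC} (see \cite[Theorems 3.3.8 and 3.3.9]{KamthanGupta1981}). The idea is then to take $\delta_n$ of the form (a large constant depending on the ``block'' of $n$) times (a slight strictly positive thickening of $|\varepsilon_n|$), with the blocks chosen long enough that the tail condition survives.

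Concretely, fixing an F-norm $\|\cdot\|$ for $E$, I would use continuity of scalar multiplication together with $\|\mu x\|\leq\|x\|$ for $|\mu|\leq 1$ to pick, for each $n$, some $t_n>0$ with $\|\lambda e_n\|\leq 2^{-n}$ whenever $|\lambda|\leq t_n$, and set $\gamma_n:=|\varepsilon_n|+t_n>0$. The first step is to check that $\sum_n\gamma_ne_n$ still satisfies the tail condition: given a finite $F$ and $|\alpha_n|\leq\gamma_n$, split $\alpha_n=\alpha_n|\varepsilon_n|/\gamma_n+\alpha_nt_n/\gamma_n$ into a part of modulus $\leq|\varepsilon_n|$ and a part of modulus $\leq t_n$; by subadditivity of $\|\cdot\|$ the first sum is controlled using the unconditional convergence of $\sum_n\varepsilon_ne_n$, and the second is $\leq\sum_{n\in F}2^{-n}\leq 2^{1-\min F}$, both small once $\min F$ is large.

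With that in hand, I would choose integers $0=M_0<M_1<M_2<\cdots$ so that $\|\sum_{n\in F}\alpha_ne_n\|<4^{-k}$ for every finite $F\subseteq\{n\geq M_k\}$ and $|\alpha_n|\leq\gamma_n$, and then define $\delta_n:=2^k\gamma_n$ for $M_{k-1}\leq n<M_k$. Then $\delta_n>0$, and since $M_k\to\infty$ the block index $k(n)$ with $M_{k(n)-1}\leq n<M_{k(n)}$ tends to $\infty$, so $|\varepsilon_n|/\delta_n\leq\gamma_n/\delta_n=2^{-k(n)}\to 0$, i.e.\ $|\varepsilon_n|=o(\delta_n)$. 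For the tail condition on $\sum_n\delta_ne_n$: given a finite $F$ with $\min F\geq M_{K-1}$ and $|\alpha_n|\leq\delta_n$, set $F_k:=\{n\in F\mid M_{k-1}\leq n<M_k\}$, so $F=\bigsqcup_{k\geq K}F_k$; on $F_k$ the scalars $\alpha_n/2^k$ have modulus $\leq\gamma_n$ and $F_k\subseteq\{n\geq M_{k-1}\}$, so using $\|2^kx\|\leq 2^k\|x\|$ (legitimate since $2^k\in\IN$) and the bound at level $k-1$ one gets $\|\sum_{n\in F_k}\alpha_ne_n\|\leq 2^k\cdot 4^{-(k-1)}=4\cdot 2^{-k}$; summing over $k\geq K$ gives $\|\sum_{n\in F}\alpha_ne_n\|\leq 8\cdot 2^{-K}\to 0$ as $\min F\to\infty$, which is exactly the tail condition.

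I expect the main obstacle to be the two peculiarities of the F-space setting that force these exact choices: $\delta_n$ must be strictly positive even where $\varepsilon_n=0$, which is why I thicken by $t_n$ and lean on subadditivity of the F-norm rather than on $\sum_n\varepsilon_ne_n$ alone; and the F-norm is not homogeneous, so the block multipliers must be taken to be integers (to use $\|2^kx\|\leq 2^k\|x\|$) and the block-tail bounds $4^{-k}$ must be chosen to beat them.
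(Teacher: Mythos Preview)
Your proof is correct and follows essentially the same block-construction strategy as the paper: pick thresholds $N_k$ (your $M_k$) from the bounded-multiplier tail form of unconditional convergence (same reference \cite{KamthanGupta1981}), then inflate on the $k$-th block by a factor growing to infinity while choosing the tail bounds summably smaller; the paper uses factor $k^{1/2}$ against tail bound $k^{-2}$ and the F-norm inequality $\|\alpha x\|\leq(1+|\alpha|)\|x\|$, whereas you use $2^k$ against $4^{-k}$ and $\|mx\|\leq m\|x\|$ for $m\in\IN$, which are interchangeable choices. The one substantive difference is that you explicitly thicken $|\varepsilon_n|$ to $\gamma_n=|\varepsilon_n|+t_n>0$ to guarantee strict positivity of $\delta_n$; the paper simply sets $\delta_n=k^{1/2}|\varepsilon_n|$, which is zero whenever $\varepsilon_n=0$, so your version is actually more careful on this point (harmless in the paper's applications, where $\varepsilon_n\equiv 1$, but needed for the lemma as stated).
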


\begin{proof}
Let $\|.\|$ be an F-norm defining the topology of $E$.
Since $\sum_{n\geq 0}\varepsilon_ne_n$ is unconditionally convergent and by using \cite[Theorems 3.3.8 and 3.3.9]{KamthanGupta1981}, we can construct inductively an increasing sequence of positive integers $(N_k)_{k\geq 1}$ such that for every $k\geq 1$, every sequence $(\alpha_n)_{n\geq 0}$ of scalars with $\sup_{n\geq 0}|\alpha_n|\leq 1$ and every finite set $F\subseteq\IN$ with $\min F>N_k$, one has $\|\sum_{n\in F}\alpha_n\varepsilon_ne_n\|\leq 1/k^2$. For each $n>N_1$, there exists a unique $k\geq 1$ such that $N_k<n\leq N_{k+1}$, and we set $\delta_n=k^{1/2}|\varepsilon_n|$.
We then have for every $1\leq k<k'$ and every finite set $F\subseteq\IN$ with $N_k<\min F\leq \max F\leq N_{k'}$,
\[
\Big\|\sum_{n\in F}\delta_ne_n\Big\|
=\Big\|\sum_{s=k}^{k'-1}\sum_{n=N_s+1,\:n\in F}^{N_{s+1}}\delta_ne_n\Big\|
\leq\sum_{s=k}^{k'-1}(1+s^{1/2})s^{-2},
\]
where we have used that an F-norm satisfies that $\|\alpha x\|\leq(1+|\alpha|)\|x\|$ for any scalar $\alpha$ and $x\in E$, see \cite[p.\ 35]{Grosse-ErdmannManguillot2011}. Since $\sum_{s\geq 1}(1+s^{1/2})s^{-2}$ is convergent, the series $\sum_{n\geq 0}\delta_ne_n$ is unconditionally convergent too.
In addition, we have that $|\varepsilon_n|=o(\delta_n)$ as $n$ goes to $\infty$.
\end{proof}

We immediately deduce the main result of this subsection, which gives conditions for an operator to have a frequently hypercyclic random vector.

\begin{theorem}\label{existenceLawFrechetSpaceGen}
Let $T$ be an operator on $E$ and let $(u_n)_{n\in\IZ}$ be a sequence in $E$.
Assume that $T(u_n)=u_{n-1}$ for every $n\in\IZ$, the series
$\sum_{n\in\IZ}u_n$ is unconditionally convergent and $\text{\emph{span}}\{u_n\mid n\in\IZ\}$ is dense in $E$.
Then there exists a random variable X with full support such that the random vector
\[
\sum_{n=-\infty}^{\infty}X_nu_n
\]
is almost surely well-defined and frequently hypercyclic for the operator $T$, and it induces a strongly mixing measure with full support for $T$, where $(X_n)_{n\in\IZ}$ is a sequence of i.i.d copies of $X$.
If $u_n=0$ for all $n\leq -1$ then the measure is even exact for $T$.
\end{theorem}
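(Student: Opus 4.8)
The plan is to assemble the statement directly from Theorem~\ref{mainTheoremFHC}, Lemma~\ref{positiveSequenceLemma} and Proposition~\ref{existenceDistributionV1}. By Theorem~\ref{mainTheoremFHC} it suffices to produce a random variable $X$ with full support together with a sequence of positive numbers $(\delta_n)_{n\in\IZ}$ such that $\sum_{n\in\IZ}\proba(|X|\geq\delta_n)<\infty$ and $\sum_{n\in\IZ}\delta_nu_n$ is unconditionally convergent; the frequent hypercyclicity, the strong mixing and, when $u_n=0$ for $n\leq -1$, the exactness, are then immediate consequences of that theorem.

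First I would fix once and for all a bijection $\varphi:\IN\to\IZ$, for instance $\varphi(0)=0$, $\varphi(2k-1)=k$ and $\varphi(2k)=-k$ for $k\geq 1$, and set $e_k:=u_{\varphi(k)}$. Since $\sum_{n\in\IZ}u_n$ is unconditionally convergent and unconditional convergence does not depend on the enumeration (the net of finite partial sums over $\IZ$ converges if and only if the reindexed series $\sum_{k\geq 0}e_k$ converges unconditionally over $\IN$), we may apply Lemma~\ref{positiveSequenceLemma} to $(e_k)_{k\geq 0}$ with $\varepsilon_k=1$ for every $k$. This yields positive numbers $(d_k)_{k\geq 0}$ such that $\sum_{k\geq 0}d_ke_k$ is unconditionally convergent and $1=o(d_k)$, i.e.\ $d_k\to\infty$. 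Putting $\delta_{\varphi(k)}:=d_k$ defines a sequence $(\delta_n)_{n\in\IZ}$ of positive numbers for which $\sum_{n\in\IZ}\delta_nu_n$ is unconditionally convergent and $\delta_n\to\infty$ as $|n|\to\infty$.

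Next I would apply Proposition~\ref{existenceDistributionV1} to the one-sided sequence $(d_k)_{k\geq 0}$: since $d_k\to\infty$, there exist a probability space and a random variable $X$ with full support such that $\sum_{k\geq 0}\proba(|X|\geq d_k)<\infty$, hence $\sum_{n\in\IZ}\proba(|X|\geq\delta_n)=\sum_{k\geq 0}\proba(|X|\geq d_k)<\infty$. Taking a sequence $(X_n)_{n\in\IZ}$ of i.i.d.\ copies of this $X$, all the hypotheses of Theorem~\ref{mainTheoremFHC} are satisfied, so $v=\sum_{n\in\IZ}X_nu_n$ is almost surely well-defined and frequently hypercyclic for $T$ and induces a strongly mixing measure with full support for $T$; if moreover $u_n=0$ for all $n\leq -1$, the measure is exact. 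This is exactly the assertion of the theorem.

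There is no genuine obstacle here: the only point requiring a little care is the matching of the two-sided indexing in Theorem~\ref{mainTheoremFHC} with the one-sided indexing in Lemma~\ref{positiveSequenceLemma} and Proposition~\ref{existenceDistributionV1}, which is dealt with once and for all by the bijection $\varphi$ and the observation that unconditional convergence of a family indexed by $\IZ$ is equivalent to unconditional convergence of its reindexing over $\IN$.
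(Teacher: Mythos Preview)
Your proposal is correct and follows essentially the same approach as the paper: both combine Lemma~\ref{positiveSequenceLemma}, Proposition~\ref{existenceDistributionV1} and Theorem~\ref{mainTheoremFHC}. The only cosmetic difference is in how the bilateral index set is reduced to a unilateral one: the paper applies Lemma~\ref{positiveSequenceLemma} separately to $\sum_{n\geq 0}u_n$ and $\sum_{n\leq -1}u_n$ and then feeds $(\min(\delta_n,\delta_{-n}))_{n\geq 0}$ into Proposition~\ref{existenceDistributionV1}, whereas you do it in one stroke via a bijection $\varphi:\IN\to\IZ$; both reductions are valid and lead to the same conclusion.
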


\begin{proof}
Let $(\delta_n)_{n\in\IZ}$ be the sequence of positive numbers obtained by applying Lemma \ref{positiveSequenceLemma} to $\sum_{n\geq 0}u_n$ and $\sum_{n\leq -1}u_n$. Then $\lim_{n\to\infty}\delta_n=\infty$ and $\lim_{n\to-\infty}\delta_n=\infty$. The result follows by applying Proposition \ref{existenceDistributionV1} to $(\min(\delta_n,\delta_{-n}))_{n\geq 0}$ in order to obtain the existence of the random variable $X$ with full support such that $\sum_{n\in\IZ}\proba(|X|\geq\delta_n)<\infty$, and then by using Theorem \ref{mainTheoremFHC}.
\end{proof}

\begin{remark}\label{existenceDistributionRemarkCondExistence}
We are mostly only interested in the existence of a random variable $X$ as given in Theorem \ref{existenceLawFrechetSpaceGen}. For a more precise information on which random variable can be employed, one has to go back to Theorem \ref{mainTheoremFHC}.
\end{remark}

Gaussian distributions are probably the most well-known probability distributions on infinite-dimensional spaces. Thus one may ask when the random variable $X$ can be subgaussian. We present two ways to achieve this.

\begin{definition}\label{definitionSubgaussian}
A random variable $X$ is \emph{subgaussian} if there exists $K>0$ and $\tau>0$ such that $\proba(|X|>t)\leq Ke^{-t^2/\tau^2}$ for every $t\geq 0$.

A sequence of random variables $(X_n)_{n\geq 0}$ is \emph{subgaussian} if each $X_n$, $n\geq 0$, is subgaussian with the same constants $\tau$ and $K$.
\end{definition}

This definition of a subgaussian variable and an equivalent one can be found in \cite[pp.\ 4-5]{Kahane1960}. A Gaussian variable is of course subgaussian, see \cite[Chapitre 8, Proposition I.1]{LiQueffelec2004}.

One could call $(X_n)_{n\geq 0}$ a \emph{uniformly subgaussian} sequence to stress the fact that the constants $\tau$ and $K$ are the same for each random variable of the sequence.

The first method to allow $X$ to be subgaussian is by assuming the unconditional convergence of the series $\sum_{n\in\IZ^*}\sqrt{\log(|n|)}u_n$, where $\IZ^{*}=\IZ\setminus\{0\}$. This assumption guarantees the almost sure convergence of the random series $\sum_{n=-\infty}^{\infty}X_nu_n$, where $(u_n)_{n\in\IZ}$ is a sequence in $E$.

\begin{lemma}\label{existenceSubgaussianFHCwellDefined}
Let $(u_n)_{n\in\IZ}$ be a sequence of vectors of $E$.
Assume that
$\sum_{n\in\IZ^*}\sqrt{\log(|n|)}u_n$
is unconditionally convergent.
Then for every subgaussian sequence $(X_n)_{n\in\IZ}$, the random vector
\[
\sum_{n=-\infty}^{\infty}X_nu_n
\]
is almost surely well-defined.
In particular, the result holds for every non constant Gaussian variable. 
\end{lemma}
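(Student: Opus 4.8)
The plan is to reduce the statement to the almost-sure convergence argument already carried out at the beginning of the proof of Theorem \ref{mainTheoremFHC}, by exhibiting a suitable dominating sequence $(\delta_n)_{n\in\IZ}$ of positive numbers. Given a subgaussian sequence $(X_n)_{n\in\IZ}$ with common constants $K>0$ and $\tau>0$, so that $\proba(|X_n|>t)\leq Ke^{-t^2/\tau^2}$ for all $n\in\IZ$ and all $t\geq 0$, I would set $\delta_n:=2\tau\sqrt{\log|n|}$ for $|n|\geq 2$ and $\delta_n:=1$ for $|n|\leq 1$, the latter choice only serving to avoid the degenerate values of $\log|n|$ at $n=0,\pm1$.

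Two properties then have to be verified. First, $\sum_{n\in\IZ}\proba(|X_n|\geq\delta_n)<\infty$: for $|n|\geq 2$ the subgaussian bound (using that $\proba(|X_n|\geq t)\leq Ke^{-t^2/\tau^2}$ as well, by continuity from below) gives $\proba(|X_n|\geq\delta_n)\leq Ke^{-\delta_n^2/\tau^2}=K|n|^{-4}$, and $\sum_{|n|\geq 2}|n|^{-4}<\infty$, while the three indices with $|n|\leq 1$ contribute a finite amount. Second, $\sum_{n\in\IZ}\delta_nu_n$ is unconditionally convergent: for $|n|\geq 2$ we have $\delta_nu_n=2\tau\sqrt{\log|n|}\,u_n$, and multiplying an unconditionally convergent series by the fixed scalar $2\tau$ preserves unconditional convergence, while adjoining or altering the finitely many terms with $|n|\leq 1$ does not affect it (note $\sqrt{\log 1}=0$, so the original series $\sum_{n\in\IZ^*}\sqrt{\log|n|}\,u_n$ carries no $n=\pm1$ contribution); hence $\sum_{n\in\IZ}\delta_nu_n$ converges unconditionally because $\sum_{n\in\IZ^*}\sqrt{\log|n|}\,u_n$ does.

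With these two properties in hand, the conclusion follows exactly as in the proof of Theorem \ref{mainTheoremFHC}: the Borel--Cantelli lemma applied to $\sum_{n\in\IZ}\proba(|X_n|\geq\delta_n)<\infty$ shows that almost surely $|X_n|<\delta_n$ for all but finitely many $n$, and on that almost sure event the domination criterion for unconditional convergence (\cite[Theorems 3.3.8 and 3.3.9]{KamthanGupta1981}), together with the unconditional convergence of $\sum_{n\in\IZ}\delta_nu_n$, forces $\sum_{n\in\IZ}X_nu_n$ to converge in $E$. Thus $v$ is almost surely well-defined. For the final assertion, a non-constant Gaussian variable $X$ is subgaussian (as recalled after Definition \ref{definitionSubgaussian}), and since a sequence of i.i.d.\ copies of $X$ consists of variables with the same distribution as $X$, it is a subgaussian sequence in the sense of Definition \ref{definitionSubgaussian}, so the special case is covered.

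I do not expect a genuine obstacle here: the only mildly delicate points are the bookkeeping at the small indices $|n|\leq 1$ and the stability of unconditional convergence under scaling by a fixed constant and under finite modification of the series, both of which are routine.
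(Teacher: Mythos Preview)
Your proof is correct and follows essentially the same route as the paper: choose $\delta_n$ proportional to $\sqrt{\log|n|}$, use the subgaussian tail bound to show $\sum_n\proba(|X_n|\geq\delta_n)<\infty$, apply Borel--Cantelli, and then conclude via the bounded-multiplier characterization of unconditional convergence. The paper's version is marginally more compact in that it works with a generic constant $c>\tau$ and directly over $\IZ^*$ without separately handling $|n|\leq 1$, but these are purely cosmetic differences.
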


\begin{proof}
Let $c>0$. We have by definition of a subgaussian sequence that
\[
\sum_{n\in\IZ^*}\proba\left(|X_n|\geq c\sqrt{\log(|n|)}\right)
\leq K\sum_{n\in\IZ^*}e^{-c^2\log(|n|)/\tau^2}
=\sum_{n\in\IZ^*}\frac{K}{|n|^{c^2/\tau^2}},
\]
for some constants $K>0$ and $\tau>0$.
If $c^2>\tau^2$ then $\sum_{n\in\IZ^*}\proba(|X|\geq c\sqrt{\log(|n|)})$ converges. It follows from the Borel-Cantelli lemma that
\[
\proba\bigg(\bigcup_{n_0\geq 1}\bigcap_{|n|\geq n_0}\Big\{|X_n|<c\sqrt{\log(|n|)}\Big\}\bigg)=1
\]
and hence, almost surely, $|X_n|<c\sqrt{\log(|n|)}$ for every $|n|$ large enough. Therefore, by the unconditional convergence of $\sum_{n\in\IZ^*}\sqrt{\log(|n|)}u_n$, the series $\sum_{n\in\IZ}X_nu_n$ is almost surely convergent. Furthermore, it is also measurable by Lemma \ref{convMeas}.
\end{proof}

\begin{theorem}\label{existenceSubgaussianFHC}
Let $T$ be an operator on $E$ and let $(u_n)_{n\in\IZ}$ be a sequence in $E$.
Assume that $T(u_n)=u_{n-1}$ for every $n\in\IZ$, $\text{\emph{span}}\{u_n\mid n\in\IZ\}$ is dense in $E$ and 
assume that the series
$
\sum_{n\in\IZ^*}\sqrt{\log(|n|)}u_n
$
is unconditionally convergent.
Then for every subgaussian random variable $X$ with full support, the random vector
\[
\sum_{n=-\infty}^{\infty}X_nu_n
\]
is almost surely well-defined and frequently hypercyclic for the operator $T$, and it induces a strongly mixing measure with full support for $T$, where $(X_n)_{n\in\IZ}$ is a sequence of i.i.d copies of $X$.
If $u_n=0$ for all $n\leq -1$, then the measure is even exact for $T$.
In particular, the result holds for every non constant Gaussian variable.
\end{theorem}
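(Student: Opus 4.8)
The plan is to deduce this from Theorem \ref{mainTheoremFHC} by manufacturing, out of the given subgaussian variable $X$, an admissible sequence $(\delta_n)_{n\in\IZ}$ of positive numbers; the hypotheses $T(u_n)=u_{n-1}$ for all $n\in\IZ$ and $\text{span}\{u_n\mid n\in\IZ\}$ dense are common to both statements, so nothing is needed there. Let $K,\tau>0$ be constants witnessing subgaussianity of $X$, i.e.\ $\proba(|X|>t)\leq Ke^{-t^2/\tau^2}$ for every $t\geq 0$. I would fix any $c>\tau$ (say $c=\tau\sqrt 2$) and set $\delta_n:=c\sqrt{\log(|n|)}$ for $|n|\geq 2$ and $\delta_n:=1$ for $n\in\{-1,0,1\}$, so that every $\delta_n$ is positive.

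The first thing to verify is the summability hypothesis of Theorem \ref{mainTheoremFHC}. For $|n|\geq 2$,
\[
\proba(|X|\geq\delta_n)\leq Ke^{-c^2\log(|n|)/\tau^2}=\frac{K}{|n|^{c^2/\tau^2}},
\]
and since $c^2/\tau^2>1$ this is the general term of a convergent series; adding the three finite terms for $n\in\{-1,0,1\}$ leaves $\sum_{n\in\IZ}\proba(|X|\geq\delta_n)<\infty$.

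The second thing to verify is unconditional convergence of $\sum_{n\in\IZ}\delta_nu_n$. Here I would use that $\log 1=0$, so that $\sum_{n\in\IZ^*}\sqrt{\log(|n|)}u_n=\sum_{|n|\geq 2}\sqrt{\log(|n|)}u_n$; by hypothesis this series converges unconditionally, and this property is preserved both under multiplication by the scalar $c$ and under adding the finitely many vectors $u_{-1},u_0,u_1$. Hence $\sum_{n\in\IZ}\delta_nu_n=c\sum_{|n|\geq 2}\sqrt{\log(|n|)}u_n+u_{-1}+u_0+u_1$ converges unconditionally.

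At this point Theorem \ref{mainTheoremFHC}, applied to $T$, $(u_n)_{n\in\IZ}$, the full-support variable $X$, and the sequence $(\delta_n)_{n\in\IZ}$ just built, delivers every conclusion sought: $v=\sum_{n\in\IZ}X_nu_n$ is almost surely well-defined and frequently hypercyclic for $T$, it induces a strongly mixing measure with full support for $T$, and that measure is exact when $u_n=0$ for all $n\leq -1$. (Almost sure well-definedness also follows directly from Lemma \ref{existenceSubgaussianFHCwellDefined}.) For the final sentence, a non-constant Gaussian variable is subgaussian and has full support, so it is an instance of what was just proved. I do not anticipate a real obstacle: the construction is essentially forced, and the only step demanding a moment's care is checking that rescaling by $c$ and adjusting the three indices near $0$ does not spoil unconditional convergence, after which everything is a direct citation of Theorem \ref{mainTheoremFHC}.
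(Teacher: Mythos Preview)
Your proposal is correct and follows essentially the same route as the paper: fix $c>\tau$, set $\delta_n=c\sqrt{\log(|n|)}$ (with ad hoc positive values at the three central indices), check the tail-probability sum via the subgaussian bound, note that the required unconditional convergence is immediate from the hypothesis up to a scalar factor and finitely many terms, and invoke Theorem \ref{mainTheoremFHC}. The paper's own proof is simply the two-line version of this, pointing back to the computation in Lemma \ref{existenceSubgaussianFHCwellDefined} and then citing Theorem \ref{mainTheoremFHC}.
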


\begin{proof}
As in the proof of Lemma \ref{existenceSubgaussianFHCwellDefined}, we have that there is some $c>0$ such that
\[
\sum_{n\in\IZ^*}\proba\left(|X|\geq c\sqrt{\log(|n|)}\right)<\infty.
\]
The result then follows by Theorem \ref{mainTheoremFHC}.
\end{proof}

The following result uses a different assumption than Theorem \ref{mainTheoremFHC}, in the case where $E$ is a Banach space. 
Recall the definition of type.
A \emph{standard Gaussian variable} is a Gaussian random variable of mean $0$ and variance $1$.
\begin{definition}
Let $E$ be a Banach space and $1\leq p\leq 2$. Then $E$ has \emph{type} $p$ if there exists $C>0$ such that for every $x_1,\dots,x_n\in E$, $n\geq 1$,
\[
\bigg\|\sum_{k=1}^nX_kx_k\bigg\|_{L^1(\Omega,\proba)}
\leq C\left(\sum_{k=1}^n\|x_k\|^p\right)^{1/p},
\]
where $(X_k)_{k=1}^n$ is a sequence of independent standard Gaussian variables.
\end{definition}

The definition is usually expressed with a Rademacher sequence
and sometimes in terms of the $L^2(\Omega,\proba)$-norm. But by \cite[Proposition 7.1.18]{HytonenNeervenVeraarWeis2017} and the Kahane-Khintchine inequalities \cite[Theorem 6.2.6]{HytonenNeervenVeraarWeis2017}, this leads to the same definition.

\begin{theorem}\label{existenceSubgaussianType}
Assume that $E$ is a Banach space of type $1\leq p \leq 2$.
Let $T$ be an operator on $E$ and let $(u_n)_{n\in\IZ}$ be a sequence in $E$.
Assume that $T(u_n)=u_{n-1}$ for every $n\in\IZ$ and $\text{\emph{span}}\{u_n\mid n\in\IZ\}$ is dense in $E$.
Assume that the series $\sum_{n=-\infty}^{\infty}\|u_n\|^p$ converges.
Let $X$ be a standard Gaussian random variable with full support and let $(X_n)_{n\in\IZ}$ be a sequence of i.i.d copies of $X$.
Then the random vector 
\[
v:=\sum_{n=-\infty}^{\infty}X_nu_n
\]
is almost surely well-defined and frequently hypercyclic for the operator $T$, and it induces a strongly mixing measure with full support for $T$.
If $u_n=0$ for all $n\leq -1$, then the measure is even exact for $T$.
\end{theorem}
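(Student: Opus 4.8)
The plan is to verify the two hypotheses of Proposition~\ref{invErgMeasureFHC} — that $v$ is almost surely well-defined, and that $\proba(v\in O)>0$ for every non-empty open set $O\subseteq E$ — and then to invoke Proposition~\ref{mainTheoremFHCExactness} for the exactness statement in the one-sided case. Note that, in contrast with Theorems~\ref{existenceLawFrechetSpaceGen} and~\ref{existenceSubgaussianFHC}, one cannot in general extract a sequence $(\delta_n)_{n\in\IZ}$ as in Theorem~\ref{mainTheoremFHC} from the sole assumption $\sum_n\|u_n\|^p<\infty$, so the argument has to go through these two propositions directly rather than through Theorem~\ref{mainTheoremFHC}.

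For the almost sure convergence I would treat the series over $n\geq 0$ and over $n\leq -1$ separately; fix the first one. For $M<N$, the type $p$ inequality applied to $u_{M+1},\dots,u_N$, together with the fact that $(X_n)_{n\in\IZ}$ is i.i.d. (so that $\sum_{k=M+1}^N X_k u_k$ has the same law as $\sum_{j=1}^{N-M}X_j u_{M+j}$), gives
\[
\Big\|\sum_{k=M+1}^N X_k u_k\Big\|_{L^1(\Omega,\proba)}\leq C\Big(\sum_{k=M+1}^N\|u_k\|^p\Big)^{1/p}.
\]
Since $\sum_n\|u_n\|^p<\infty$, the partial sums $\sum_{k=0}^N X_k u_k$ are Cauchy in the Banach space $L^1(\Omega;E)$, hence converge there, hence in probability; as $\sum_{k\geq 0}X_k u_k$ is a series of independent symmetric $E$-valued random variables, the Itô--Nisio theorem (equivalently, the equivalence of the modes of convergence for Gaussian series, see \cite{HytonenNeervenVeraarWeis2017}) promotes this to almost sure convergence. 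Doing the same for $n\leq-1$ shows that $v$ is almost surely well-defined, and it is measurable by Lemma~\ref{convMeas}. A by-product is that $v$ is the $L^1(\Omega;E)$-limit of $S_N:=\sum_{|n|\leq N}X_n u_n$, so that $R_N:=v-S_N$ satisfies $\expect\|R_N\|\to 0$.

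For the full support, since $\text{span}\{u_n\mid n\in\IZ\}$ is dense it suffices to show $\proba(\|v-y\|<\eta)>0$ for every $\eta>0$ and every $y=\sum_{n=-d}^d y_n u_n$, and I would mirror the end of the proof of Theorem~\ref{mainTheoremFHC}. Pick $N\geq d$ with $\expect\|R_N\|<\eta/4$, so that $\proba(\|R_N\|<\eta/2)>1/2$ by Markov's inequality. Since $S_N$ and $R_N$ depend on disjoint families of the $X_n$'s they are independent (apply Lemma~\ref{convIndependence} to the finite partial sums of $R_N$), whence
\[
\proba(\|v-y\|<\eta)\geq\proba\big(\|S_N-y\|<\tfrac{\eta}{2}\big)\,\proba\big(\|R_N\|<\tfrac{\eta}{2}\big).
\]
Setting $y_n=0$ for $d<|n|\leq N$, the map $(\xi_n)_{|n|\leq N}\mapsto\sum_{|n|\leq N}\xi_n u_n$ is continuous from $\IK^{2N+1}$ to $E$ and sends $(y_n)_{|n|\leq N}$ to $y$, so $\{\|S_N-y\|<\eta/2\}$ pulls back to an open neighbourhood of $(y_n)_{|n|\leq N}$ in $\IK^{2N+1}$, which has positive probability because the Gaussian vector $(X_n)_{|n|\leq N}$ has full support. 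Thus $\proba(v\in O)>0$ for every non-empty open $O$, Proposition~\ref{invErgMeasureFHC} applies, and when $u_n=0$ for all $n\leq-1$ one has $T(u_0)=u_{-1}=0$ and $v=\sum_{n\geq 0}X_n u_n$, so Proposition~\ref{mainTheoremFHCExactness} gives exactness.

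The main obstacle is the first step: the type $p$ hypothesis only yields convergence in $L^1$ (equivalently, in probability), and upgrading this to almost sure convergence of the Gaussian series $v$ is the one genuinely non-elementary ingredient, handled by the Itô--Nisio theorem. Everything else — the finite-dimensional positivity of the Gaussian mass and the reduction to Propositions~\ref{invErgMeasureFHC} and~\ref{mainTheoremFHCExactness} — is routine.
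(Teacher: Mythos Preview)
Your proof is correct and follows essentially the same route as the paper: use the type $p$ inequality to get $L^1(\Omega;E)$-convergence of the partial sums, upgrade to almost sure convergence via the It\^o--Nisio/Gaussian equivalence of modes of convergence (the paper cites \cite[Corollary~6.4.4]{HytonenNeervenVeraarWeis2017} for exactly this), then show full support by splitting $v=S_N+R_N$, using Markov on the tail $R_N$ and the full support of the finite-dimensional Gaussian $(X_n)_{|n|\leq N}$ for $S_N$, and finally invoke Propositions~\ref{invErgMeasureFHC} and~\ref{mainTheoremFHCExactness}. The only cosmetic difference is that you fix $N$ so that $\expect\|R_N\|<\eta/4$ to get $\proba(\|R_N\|<\eta/2)>1/2$, whereas the paper just takes $N$ large enough that $1-\proba(C)<1$.
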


\begin{proof}
Since $E$ has type $p$, we have, for every $M\geq N$
\begin{equation}\label{existenceSubgaussianTypeIneq}
\expect\left(\Bigg\|\sum_{n=N}^M X_nu_n\Bigg\|\right)\leq C_p \left(\sum_{n=N}^{M}\|u_n\|^p\right)^{1/p}
\end{equation}
where $C_p>0$ is some constant depending only on $p$.
Therefore, the random series $\sum_{n=-\infty}^{\infty}X_nu_n$ converges in $L^1(\Omega;E)$, and since $(X_n)_{n\in\IZ}$ is a standard Gaussian sequence of independent random variables, $v$ is almost surely well-defined by \cite[Corollary 6.4.4]{HytonenNeervenVeraarWeis2017}.

By Proposition \ref{invErgMeasureFHC}, it remains to show that $\proba(v\in O)>0$ for every non-empty open subset $O$ of $E$.
It is enough to show this on a base of open subsets of $E$.

So let $\eta>0$ and $y=\sum_{n=-d}^dy_nu_n\in E$. We shall prove that $\proba(v\in B_{\|.\|}(y,\eta))>0$ where $B_{\|.\|}(y,\eta)$ is the open ball centred at $y$ and of radius $\eta$. Let $N\geq d$ be an integer.
Define
\[
B:=\left\{
\bigg\|\sum_{n=-N}^N(X_n-y_n)u_n\bigg\|<\frac{\eta}{2}
\right\},
\text{ }
C:=\left\{
\bigg\|\sum_{|n|\geq N+1}X_nu_n\bigg\|<\frac{\eta}{2}
\right\},
\]
where $y_n:=0$ if $d<|n|\leq N$, and let $A:=B\cap C$. By the triangle inequality we get on $A$
\[
\|v-y\|
\leq\bigg\|\sum_{n=-N}^N(X_n-y_n)u_n\bigg\|
+\bigg\|\sum_{|n|\geq N+1}X_nu_n\bigg\|
<\frac{\eta}{2}+\frac{\eta}{2}=\eta.
\]
This shows that $A\subseteq\{v\in B_{\|.\|}(y,\eta)\}$. Thus it suffices to prove that $\proba(A)>0$. Since $(X_n)_{n\in\IZ}$ is i.i.d, we have by Lemma \ref{convIndependence} that
\[
\proba(A)=\proba(B)\proba(C).
\]
Since $X$ has full support and $(X_n)_{n\in\IZ}$ is i.i.d, we have $\proba(B)>0$. The last step is to show that $\proba(C)>0$. The Markov inequality yields
\[
1-\proba(C)
=\proba\left(\Bigg\|\sum_{|n|\geq N+1}X_nu_n\Bigg\|\geq \eta/2\right)
\leq(\eta/2)^{-1}\expect\left(\Bigg\|\sum_{|n|\geq N+1}X_nu_n\Bigg\|\right).
\]
It follows from \eqref{existenceSubgaussianTypeIneq} that if we take $N\geq d$ large enough then $1-\proba(C)<1$, i.e.\ $\proba(C)>0$.
\end{proof}

\section{Applications}

\subsection{Weighted shifts}

We list the applications of Theorem \ref{existenceLawFrechetSpaceGen} and Theorem \ref{existenceSubgaussianFHC} to unilateral and bilateral weighted shifts.

A \emph{sequence space over $\IN$ (resp.\ over $\IZ$)} $E$ is a subspace of $\IK^{\IN}$ (resp.\ $\IK^{\IZ}$) such that convergence in $E$ implies convergence in $\IK^{\IN}$ (resp.\ $\IK^{\IZ}$).
The vectors $e_n=(\dots,0,1,0,\dots)$ where $1$ lies at the $n$-th coordinate, $n\geq 0$ (resp.\ $n\in\IZ$), are called the \emph{canonical unit sequences}.
Let $E$ be a sequence space over $\IN$ (resp.\ over $\IZ$) such that the canonical unit sequences span a dense subspace. A \emph{unilateral (resp.\ bilateral) weighted shift} $T:E\longrightarrow E$ is an operator such that $T(e_n)=w_ne_{n-1}$ for all $n\geq 1$, and $T(e_0)=0$ (resp.\ $T(e_n)=w_ne_{n-1}$ for all $n\in\IZ$), where $(w_n)_{n}$ is a sequence of nonzero scalars called the \emph{weight sequence}.

Let $T$ be a weighted shift with weight sequence $(w_n)_{n}$. If $T$ is a unilateral weighted shift, define $\beta_n:=w_1\dots w_n$ if $n\geq 1$, and $\beta_0:=1$. If $T$ is a bilateral weighted shift, define $\beta_n:=w_1\dots w_n$ if $n\geq 1$, $\beta_n:=(\prod_{k=-n+1}^{0}w_k)^{-1}$ if $n\leq -1$, and $\beta_0:=1$.

In the first two results, let $E$ be a locally bounded or locally convex F-sequence space over $\IN$ in which $\text{span}\{e_n\mid n\in\IN\}$ is dense. We will then apply the results of Section \ref{existenceDistribution} to $u_n=\frac{e_n}{\beta_n}$ for $n\geq 0$ and $u_n=0$ for $n\leq -1$.

\begin{theorem}\label{existenceLawFrechetSpaceUnilateralWeightedShift}
Let $T:E\longrightarrow E$ be a weighted shift with sequence of weights $(w_n)_{n\geq 1}$.
\begin{enumerate}[label=(\roman*)]
\item Assume that the series
$\sum_{n\in\IN}\frac{e_n}{\beta_n}$ is unconditionally convergent.
Then there exists a random variable X with full support such that the random vector
\[
\sum_{n=0}^{\infty}\frac{X_n}{\beta_n}e_n
\]
is almost surely well-defined and frequently hypercyclic for the operator $T$, and it induces an exact measure with full support for $T$, where $(X_n)_{n\geq 0}$ is a sequence of i.i.d copies of $X$.
\item If the series
$
\sum_{n\geq 1}\frac{\sqrt{\log(n)}}{\beta_n}e_n
$
is unconditionally convergent then $X$ can be any subgaussian random variable with full support. In particular, the result holds for every non constant Gaussian variable.
\end{enumerate}
\end{theorem}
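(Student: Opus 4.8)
The plan is to reduce both parts to the general results of Section~\ref{existenceDistribution} that have already been established: Theorem~\ref{existenceLawFrechetSpaceGen} for part~(i) and Theorem~\ref{existenceSubgaussianFHC} for part~(ii). Following the convention fixed just before the statement, I would set $u_n:=\frac{e_n}{\beta_n}$ for $n\geq 0$ and $u_n:=0$ for $n\leq -1$, so that the random vector $\sum_{n\geq 0}\frac{X_n}{\beta_n}e_n$ under consideration is literally the vector $\sum_{n\in\IZ}X_nu_n$ treated in those theorems, with $(X_n)_{n\in\IZ}$ a sequence of i.i.d.\ copies of $X$.

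Next I would verify the structural hypotheses common to both cited theorems. The density of $\operatorname{span}\{u_n\mid n\in\IZ\}=\operatorname{span}\{e_n\mid n\geq 0\}$ in $E$ is part of the standing assumption on the sequence space. The only computation is the intertwining relation $T(u_n)=u_{n-1}$ for all $n\in\IZ$: for $n\leq 0$ both sides vanish, using $T(e_0)=0$ together with $u_{-1}=0$; for $n\geq 1$ it reduces to the identity $\beta_n=w_n\beta_{n-1}$, which is immediate from $\beta_n=w_1\cdots w_n$ and $\beta_0=1$. This bookkeeping with the $\beta_n$'s is the only point where anything specific to weighted shifts enters, and it is the step most deserving of care (in particular the separate treatment of $n=0$ and $n=-1$), although it is entirely routine.

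For part~(i), the hypothesis that $\sum_{n\in\IN}\frac{e_n}{\beta_n}$ is unconditionally convergent is exactly the unconditional convergence of $\sum_{n\in\IZ}u_n$, since the omitted terms $u_n$ with $n\leq -1$ are zero; hence Theorem~\ref{existenceLawFrechetSpaceGen} applies and yields a random variable $X$ with full support having all the asserted properties, and since $u_n=0$ for all $n\leq -1$ the induced measure is exact for $T$. For part~(ii), the hypothesis that $\sum_{n\geq 1}\frac{\sqrt{\log n}}{\beta_n}e_n$ is unconditionally convergent is precisely the unconditional convergence of $\sum_{n\in\IZ^*}\sqrt{\log|n|}\,u_n$ (again because $u_n=0$ for $n\leq -1$), so Theorem~\ref{existenceSubgaussianFHC} applies and gives the conclusion for an arbitrary subgaussian random variable $X$ with full support, once more with exactness since $u_n$ vanishes for $n\leq -1$. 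The concluding ``in particular'' assertion then follows because every non-constant Gaussian variable is subgaussian and has full support.
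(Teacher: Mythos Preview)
Your proposal is correct and follows exactly the approach indicated in the paper: set $u_n=\frac{e_n}{\beta_n}$ for $n\geq 0$ and $u_n=0$ for $n\leq -1$, verify $T(u_n)=u_{n-1}$ and the density of the span, and then invoke Theorem~\ref{existenceLawFrechetSpaceGen} for part~(i) and Theorem~\ref{existenceSubgaussianFHC} for part~(ii), with exactness coming from the vanishing of $u_n$ for $n\leq -1$. The paper itself gives no further details beyond the sentence preceding the statement, so your write-up is in fact more explicit than the original.
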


This generalizes the qualitative parts of \cite[Theorem 2.3]{MouzeMunnier2014} and \cite[Theorem 1]{Nikula2014}; their quantitative parts are contained in Theorem \ref{mainTheoremFHC}.

We next consider a locally bounded or locally convex F-sequence space $E$ over $\IZ$ in which $\text{span}\{e_n\mid n\in\IZ\}$ is dense. We then apply the results of Section \ref{existenceDistribution} to $u_n=\frac{e_n}{\beta_n}$, $n\in\IZ$.

\begin{theorem}\label{existenceLawFrechetSpaceBilateralWeightedShift}
Let $T:E\longrightarrow  E$ be a bilateral weighted shift with sequence of weights $(w_n)_{n\in\IZ}$.
\begin{enumerate}[label=(\roman*)]
\item Assume that the series
$\sum_{n\in\IZ}\frac{e_n}{\beta_n}$ is unconditionally convergent.
Then there exists a random variable X with full support such that the random vector
\[
\sum_{n=-\infty}^{\infty}\frac{X_n}{\beta_n}e_n
\]
is almost surely well-defined and frequently hypercyclic for the operator $T$, and it induces a strongly mixing measure with full support for $T$, where $(X_n)_{n\in\IZ}$ is a sequence of i.i.d copies of $X$.
\item If the series
$
\sum_{n\in\IZ^*}\frac{\sqrt{\log(|n|)}}{\beta_n}e_n
$
is unconditionally convergent then $X$ can be any subgaussian random variable with full support. In particular, the result holds for every non constant Gaussian variable.
\end{enumerate}
\end{theorem}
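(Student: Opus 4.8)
The plan is to recognize this as a direct application of the abstract results of Section \ref{existenceDistribution}, with the correct choice of the vectors $u_n$. Concretely, I set $u_n := \frac{e_n}{\beta_n}$ for every $n\in\IZ$. The first thing to check is the intertwining relation $T(u_n)=u_{n-1}$. By definition of the bilateral weighted shift, $T(e_n)=w_n e_{n-1}$, so $T(u_n)=\frac{w_n}{\beta_n}e_{n-1}$, and the definition of $(\beta_n)_{n\in\IZ}$ is precisely arranged so that $\frac{w_n}{\beta_n}=\frac{1}{\beta_{n-1}}$ for all $n\in\IZ$: this is immediate for $n\geq 2$, follows from $\beta_0=1$ for $n=1$, and one checks the remaining cases $n\le 0$ directly from the formula $\beta_n=(\prod_{k=-n+1}^{0}w_k)^{-1}$. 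Hence $T(u_n)=u_{n-1}$ for all $n\in\IZ$. The density hypothesis on $\text{span}\{u_n\mid n\in\IZ\}$ is the same as density of $\text{span}\{e_n\mid n\in\IZ\}$, which is assumed on the sequence space $E$. Note that here $u_n\ne 0$ for all $n$, so the exactness conclusion is not available and we only obtain strong mixing — consistent with the statement.

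For part (i), the hypothesis is exactly that $\sum_{n\in\IZ}\frac{e_n}{\beta_n}=\sum_{n\in\IZ}u_n$ is unconditionally convergent. Together with the intertwining relation and the density, this is precisely the hypothesis of Theorem \ref{existenceLawFrechetSpaceGen}. Applying that theorem yields a random variable $X$ with full support such that $\sum_{n\in\IZ}X_n u_n=\sum_{n\in\IZ}\frac{X_n}{\beta_n}e_n$ is almost surely well-defined, frequently hypercyclic for $T$, and induces a strongly mixing measure with full support. This is the assertion of part (i).

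For part (ii), the hypothesis is that $\sum_{n\in\IZ^*}\frac{\sqrt{\log(|n|)}}{\beta_n}e_n=\sum_{n\in\IZ^*}\sqrt{\log(|n|)}\,u_n$ is unconditionally convergent. This is exactly the extra hypothesis of Theorem \ref{existenceSubgaussianFHC} (the other hypotheses — the intertwining relation and the density — having already been verified above). Applying Theorem \ref{existenceSubgaussianFHC} gives that for every subgaussian random variable $X$ with full support, the random vector $\sum_{n\in\IZ}\frac{X_n}{\beta_n}e_n$ is almost surely well-defined, frequently hypercyclic for $T$, and induces a strongly mixing measure with full support; in particular this applies to every non-constant Gaussian variable. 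This proves part (ii). There is no real obstacle here: the only genuine verification is the bookkeeping identity $\frac{w_n}{\beta_n}=\frac{1}{\beta_{n-1}}$ across the three regimes $n\ge 2$, $n=1$, $n\le 0$ in the definition of $(\beta_n)_{n\in\IZ}$, and everything else is a citation of the abstract theorems of Section \ref{existenceDistribution}.
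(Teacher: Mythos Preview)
Your proposal is correct and matches the paper's own approach exactly: the paper simply states that one applies the results of Section \ref{existenceDistribution} with $u_n=\frac{e_n}{\beta_n}$, $n\in\IZ$, and your write-up just makes the routine verifications ($T(u_n)=u_{n-1}$ and density of the span) explicit before invoking Theorems \ref{existenceLawFrechetSpaceGen} and \ref{existenceSubgaussianFHC}.
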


Theorems \ref{existenceLawFrechetSpaceUnilateralWeightedShift} and \ref{existenceLawFrechetSpaceBilateralWeightedShift} apply, in particular, to any chaotic unilateral or bilateral weighted shift on an F-sequence space in which $(e_n)_n$ is an unconditional basis, see \cite[Theorems 4.8, 4.13]{Grosse-ErdmannManguillot2011}.
The existence of an exact or strongly mixing measure with full support has already been proved in \cite[Corollary 2 and Remark 3]{Murillo-ArcilaPeris2013}. A different approach has also led to the existence of a strongly mixing measure in \cite[Theorem 1]{LopesMessaoudiStadlbauerVargas2021} for a class of weighted shifts on $c_0(\IN)$ or $\ell^p(\IN)$, $1\leq p<\infty$.

\begin{remark}
The bilateral weighted shift on $\ell^2(\IZ)$ with weights $w_n=2$, $n\geq 1$, and $w_n=1/2$, $n\leq 0$, is invertible and satisfies the assumptions of Theorem \ref{existenceLawFrechetSpaceBilateralWeightedShift}. On the other hand, no invertible measure preserving transformation can be exact, see \cite[p.\ 86]{DajaniKalle2021}. Thus the measure induced by the vector $v$ in Theorem \ref{mainTheoremFHC} cannot be exact for all operators $T$.
\end{remark}

It might be an interesting fact that on the space $H(\IC)$ of entire functions or the space $H(D(0,R))$ of holomorphic functions on $D(0,R):=\{z\in\IC\mid |z|<R\}$, every chaotic weighted shift satisfies the assumption of the second assertion of Theorem \ref{existenceLawFrechetSpaceUnilateralWeightedShift}.

\begin{theorem}\label{existenceSubgaussianHCHD}
On the space $E=H(\IC)$ or $H(D(0,R))$ with $R>0$, let $T:E\longrightarrow E$ be a chaotic weighted shift with sequence of weights $(w_n)_{n\geq 1}$.
Then for every subgaussian random variable $X$ with full support 
the random function
\[
\sum_{n=0}^{\infty}\frac{X_n}{\beta_n}z^n
\]
is almost surely well-defined and frequently hypercyclic for the operator $T$, and it induces an exact measure with full support for $T$, where $(X_n)_{n\in\IZ}$ is a sequence of i.i.d copies of $X$.
In particular, the result holds for every non constant Gaussian variable.
\end{theorem}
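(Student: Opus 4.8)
The plan is to reduce Theorem~\ref{existenceSubgaussianHCHD} to the second assertion of Theorem~\ref{existenceLawFrechetSpaceUnilateralWeightedShift}, so the only real work is to verify that a chaotic weighted shift on $H(\IC)$ or $H(D(0,R))$ automatically satisfies the unconditional convergence of $\sum_{n\geq 1}\frac{\sqrt{\log n}}{\beta_n}z^n$. First I would recall the characterization of chaos for weighted shifts on these spaces: on $H(\IC)$, a weighted shift $T$ with weights $(w_n)_{n\geq 1}$ is chaotic if and only if $\sum_{n\geq 0}\frac{1}{|\beta_n|}z^n$ defines an entire function, equivalently $\liminf_{n\to\infty}|\beta_n|^{1/n}=\infty$; on $H(D(0,R))$, $T$ is chaotic if and only if $\sum_{n\geq 0}\frac{1}{|\beta_n|}z^n$ has radius of convergence at least $R$, i.e.\ $\liminf_{n\to\infty}|\beta_n|^{1/n}\geq 1/R$ (these are the standard criteria, e.g.\ \cite[Examples 4.4, 4.9 and the surrounding discussion]{Grosse-ErdmannManguillot2011}).

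Next I would observe that on both spaces the topology is given by the seminorms $p_r(f)=\sup_{|z|\leq r}|f(z)|$, for $r>0$ ranging over all positive reals in the $H(\IC)$ case and over $0<r<R$ in the $H(D(0,R))$ case, and that for a power series $\sum a_n z^n$ one has $\sup_{|z|\le r}|\sum a_n z^n|\le \sum |a_n| r^n$. Hence, to prove that $\sum_{n\geq 1}\frac{\sqrt{\log n}}{\beta_n}z^n$ is unconditionally convergent in $E$, it suffices to show that for every admissible $r$ the scalar series $\sum_{n\geq 1}\frac{\sqrt{\log n}}{|\beta_n|}r^n$ converges; indeed, absolute convergence of the defining coefficients against every seminorm gives unconditional convergence in this Fréchet setting (the partial sums over any finite set are dominated coordinatewise). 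The key point is then purely a root-test estimate: from the chaos criterion, $|\beta_n|^{1/n}\to\infty$ (on $H(\IC)$) or $\liminf|\beta_n|^{1/n}\geq 1/R$ (on $H(D(0,R))$), and since $(\sqrt{\log n})^{1/n}\to 1$, the $n$-th root of the general term of $\sum_n \frac{\sqrt{\log n}}{|\beta_n|}r^n$ is $\frac{(\sqrt{\log n})^{1/n}\, r}{|\beta_n|^{1/n}}$, whose limsup is $0<1$ in the entire case and is $\le Rr/R<1$ on $H(D(0,R))$ whenever $r<R$. Actually a small care is needed on $H(D(0,R))$ since $\liminf|\beta_n|^{1/n}\ge 1/R$ only controls the liminf, not the limsup; but the root test for convergence of $\sum c_n$ needs only $\limsup c_n^{1/n}<1$, and here $\limsup\frac{(\sqrt{\log n})^{1/n} r}{|\beta_n|^{1/n}} = \frac{r}{\liminf |\beta_n|^{1/n}}\le \frac{r}{1/R}=\frac{r}{R}<1$, so convergence holds for each fixed $r<R$. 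This is where I expect the only genuine subtlety to lie: making sure one uses the liminf form of the radius-of-convergence criterion correctly and that the extra factor $\sqrt{\log n}$ (which is subexponential) does not spoil the estimate — it does not, precisely because $(\sqrt{\log n})^{1/n}\to 1$.

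Finally I would assemble the pieces: the space $E=H(\IC)$ or $H(D(0,R))$ is a separable Fréchet space, hence a locally convex separable F-space, in which $\{e_n=z^n\mid n\in\IN\}$ is dense (it is a Schauder basis). The operator $T$ acts as the weighted shift $T(z^n)=w_n z^{n-1}$, $T(1)=0$, and with $u_n=\frac{1}{\beta_n}z^n$ one has $T(u_n)=u_{n-1}$ for $n\ge 1$ and $T(u_0)=0$, exactly the unilateral setting of Theorem~\ref{existenceLawFrechetSpaceUnilateralWeightedShift}. Having verified that $\sum_{n\ge 1}\frac{\sqrt{\log n}}{\beta_n}z^n$ is unconditionally convergent, assertion~(ii) of that theorem applies verbatim: for every subgaussian random variable $X$ with full support, $\sum_{n\ge 0}\frac{X_n}{\beta_n}z^n$ is almost surely well-defined, frequently hypercyclic for $T$, and induces an exact measure with full support; the ``in particular'' statement for non-constant Gaussian $X$ follows since such an $X$ is subgaussian with full support. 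This completes the proof.
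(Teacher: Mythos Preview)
Your approach is correct and essentially identical to the paper's: both reduce to Theorem~\ref{existenceLawFrechetSpaceUnilateralWeightedShift}(ii) by verifying, via the root test and the characterization of chaos on these spaces, that $\sum_{n\ge1}\frac{\sqrt{\log n}}{|\beta_n|}r^n$ converges for every admissible $r$, the factor $\sqrt{\log n}$ being harmless since $(\sqrt{\log n})^{1/n}\to 1$. One slip to fix: on $H(D(0,R))$ the chaos condition is $\limsup_n|\beta_n|^{-1/n}\le 1/R$, equivalently $\liminf_n|\beta_n|^{1/n}\ge R$ (not $\ge 1/R$), and with this correction your estimate reads $\limsup_n\frac{(\sqrt{\log n})^{1/n}r}{|\beta_n|^{1/n}}\le r/R<1$, which is exactly what you wrote as your final bound.
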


\begin{proof}
By Theorem \ref{existenceLawFrechetSpaceUnilateralWeightedShift}, it suffices to show that if $T$ is chaotic on $E$ then the series $\sum_{n\geq 1}\frac{\sqrt{\log(n)}}{\beta_n}e_n$ is unconditionally convergent in $E$.

On $H(\IC)$, $T$ is chaotic if and only if $\lim_{n\to\infty} |\beta_n|^{1/n}=\infty$, see \cite[Example 4.9(b)]{Grosse-ErdmannManguillot2011}. Therefore, for every $r\geq 1$ and $0<\rho<1$, there exists $n_0\geq 1$ such that for every $n\geq n_0$, $r^n\sqrt{\log(n)}/|\beta_n|\leq \rho^n$.

On $H(D(0,R))$, $T$ is chaotic if and only if $\limsup_{n\to\infty}|\beta_n|^{-1/n}\leq 1/R$, see \cite[Theorem 4.8]{Grosse-ErdmannManguillot2011}. Let $0<r<R$ and $0<\rho<1$ such that $r<\rho R$, there exists $n_0\geq 1$ such that for every $n\geq n_0$, $\log(n)^{1/(2n)}|\beta_n|^{-1/n}\leq\rho/r$ and hence $r^n\sqrt{\log(n)}/|\beta_n|\leq \rho^n$.
\end{proof}

For the differentiation operator on $H(\IC)$, this result was proved in the Gaussian case in \cite[Remark 2 after Proposition 8.1]{BayartMatheron2016}. For the Taylor shift on $H(D(0,1))$, which is given by the weights $w_n=1$, $n\geq 1$, the frequent hypercyclicity of the random function was proved in the Gaussian case in \cite[Theorem 1.3]{MouzeMunnier2021}.

One can ask the same question about the spaces $\ell^p$, $1\leq p< \infty$. In fact, it is already known that $\sum_{n=0}^{\infty}\frac{X_n}{\beta_n}e_n$ is almost surely well-defined and frequently hypercyclic on those spaces if the random variables $X_n$, $n\geq 0$, are Gaussian and the weighted shift is chaotic as said in the introduction, see \cite[Section 5.5.2]{BayartMatheron2009}, \cite[Section 7.1]{BayartMatheron2016}. However, the second assertion of Theorem \ref{existenceLawFrechetSpaceUnilateralWeightedShift} cannot be applied to every chaotic weighted shift defined on $\ell^p$, $1\leq p<\infty$. Indeed, consider the sequence $(\beta_n)_{n\geq 1}:=(\log(n)^{1/2+1/p}n^{1/p})_{n\geq 1}$. Then $\sum_{n\geq 0}\sqrt{\log(n)}/\beta_ne_n$ is not in $\ell^p $ but the weighted shift associated to $(\beta_n)_{n\geq 1}$ is chaotic. Note that Theorem \ref{existenceSubgaussianType} can be applied to any chaotic weighted shift on $\ell^p$, $1\leq p\leq 2$.

In their article \cite{MouzeMunnier2014}, Mouze and Munnier have also studied some polynomials of a frequently hypercyclic weighted shift on $\ell^p$, $1\leq p<\infty$. But their Lemma 4.1 says that certain polynomials of a weighted shift can be seen as a shift with respect to another basis. 
The proof of this lemma shows the following.

\begin{lemma}[{\cite[Lemma 4.1]{MouzeMunnier2014}}]\label{polynomialShiftBasis}
Let $T:\IK^{\IN}\longrightarrow\IK^{\IN}$ be a weighted shift. Let $P(z)=\sum_{k=1}^d a_kz^k$ be a polynomial with $a_1\neq 0$. Then there exist vectors $u_n=\sum_{j=0}^n\beta_{j,n}e_j$ such that $(u_n)_{n\geq 0}$ is an algebraic basis of $\IK^{\IN}$ and $P(T)(u_n)=u_{n-1}$ for every $n\geq 1$.
\end{lemma}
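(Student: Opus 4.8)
The plan is to construct the vectors $u_n$ explicitly from the action of $P(T)$ on the canonical basis and then verify the two required properties: that they form an algebraic basis of $\IK^{\IN}$ and that $P(T)(u_n) = u_{n-1}$. Write $P(z) = \sum_{k=1}^d a_k z^k$ with $a_1 \neq 0$. Since $T$ is a weighted shift with $T(e_n) = w_n e_{n-1}$ for $n \geq 1$ and $T(e_0) = 0$, the operator $P(T)$ sends each $e_n$ to a linear combination of $e_{n-1}, e_{n-2}, \dots, e_{\max(0,n-d)}$, and crucially the coefficient of $e_{n-1}$ in $P(T)(e_n)$ is $a_1 w_n \neq 0$ for every $n \geq 1$. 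So $P(T)$, written in the basis $(e_n)_n$, is lower-triangular with respect to the index shift by one, with nonzero "first subdiagonal" entries.

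First I would define the $u_n$ recursively so as to force $P(T)(u_n) = u_{n-1}$. Set $u_0 = e_0$. Suppose $u_0, \dots, u_{n-1}$ have been constructed with $u_j = \sum_{i=0}^{j} \beta_{i,j} e_i$ and $\beta_{j,j} \neq 0$. I want $u_n = \sum_{i=0}^{n} \beta_{i,n} e_i$ with $P(T)(u_n) = u_{n-1}$. Applying $P(T)$ to the ansatz and using that $P(T)(e_i)$ has top term $a_1 w_i e_{i-1}$, one finds that the coefficient of $e_{n-1}$ in $P(T)(u_n)$ is $a_1 w_n \beta_{n,n}$; matching it to $\beta_{n-1,n-1}$ (the top coefficient of $u_{n-1}$) determines $\beta_{n,n} = \beta_{n-1,n-1}/(a_1 w_n) \neq 0$. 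Then the coefficient of $e_{n-2}$ in $P(T)(u_n)$ involves $\beta_{n,n}$ (already fixed) and $\beta_{n-1,n}$ linearly with nonzero coefficient $a_1 w_{n-1}$; matching to the $e_{n-2}$-coefficient of $u_{n-1}$ determines $\beta_{n-1,n}$. Continuing downward, each $\beta_{i,n}$ for $i = n-1, n-2, \dots, 0$ is determined uniquely by a triangular solve, because at each stage the unknown $\beta_{i,n}$ enters the coefficient of $e_{i-1}$ (resp.\ $e_0$ terms from $T(e_1)$, $T^2(e_2)$, etc., need slight care at the bottom) with the nonzero factor coming from $a_1 w_i$. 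This produces $u_n$ with the desired relation and with leading coefficient $\beta_{n,n} \neq 0$.

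Next I would check that $(u_n)_{n \geq 0}$ is an algebraic basis of $\IK^{\IN}$, meaning every sequence in $\IK^{\IN}$ is a (finite) linear combination — equivalently, $\{u_0, \dots, u_n\}$ spans the same finite-dimensional subspace as $\{e_0, \dots, e_n\}$ for every $n$. This is immediate from the triangular shape: since $u_j = \beta_{j,j} e_j + (\text{lower terms})$ with $\beta_{j,j} \neq 0$, the change-of-basis matrix from $(e_0,\dots,e_n)$ to $(u_0,\dots,u_n)$ is lower triangular with nonzero diagonal, hence invertible, for each $n$; therefore the $u_n$ are linearly independent and their span contains every $e_n$, so they form an algebraic basis of $\IK^{\IN}$ in the sense of finitely supported combinations.

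I do not expect a genuine obstacle here — the statement is essentially a bookkeeping exercise — but the one point requiring care is the behaviour near index $0$: because $T(e_0) = 0$ and $T^k(e_k)$ "hits the bottom" for small $k$, the system determining $\beta_{i,n}$ for small $i$ should be checked to remain triangular with nonzero pivots (the pivot is always a product of the form $a_1 w_i$ for the relevant surviving index), and one must confirm that no equation becomes over- or under-determined at the low end. Organizing the recursion as "determine $\beta_{n,n}$ first from the $e_{n-1}$-coefficient, then $\beta_{n-1,n}$ from the $e_{n-2}$-coefficient, and so on down to $\beta_{0,n}$ from the $e_0$-coefficient (or, if $n \le d$, noting the remaining coefficients are automatically consistent)" makes the triangular structure transparent and handles the boundary cleanly.
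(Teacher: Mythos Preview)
The paper does not supply its own proof of this lemma; it is quoted from Mouze and Munnier, so there is no argument in the paper to compare your attempt against. Your recursive construction is the natural one and is correct in outline: the key point, which you identify, is that the coefficient of $e_{n-1}$ in $P(T)(e_n)$ is $a_1 w_n\neq 0$, so the system determining the $\beta_{i,n}$ is triangular with nonzero pivots.

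Two small corrections are worth making. First, the equation $P(T)(u_n)=u_{n-1}$ yields only $n$ scalar equations (the coefficients of $e_0,\dots,e_{n-1}$) for the $n+1$ unknowns $\beta_{0,n},\dots,\beta_{n,n}$; since $P(T)(e_0)=0$, the coefficient $\beta_{0,n}$ does not appear in $P(T)(u_n)$ at all, so it is a free parameter rather than being ``determined uniquely'' as you write. You may simply set $\beta_{0,n}=0$ (or any value) and the construction goes through. Second, your gloss of ``algebraic basis of $\IK^{\IN}$'' as ``every sequence in $\IK^{\IN}$ is a finite linear combination'' cannot be what is intended: this already fails for $(e_n)_{n\geq 0}$, since $\IK^{\IN}$ has uncountable Hamel dimension. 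What your triangular argument actually establishes, and what is needed, is that $\mathrm{span}\{u_0,\dots,u_n\}=\mathrm{span}\{e_0,\dots,e_n\}$ for every $n$, so that $(u_n)$ is a basis of $\IK^{\IN}$ in exactly the same sense that $(e_n)$ is.
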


In fact, this result implies that $\text{span}\{e_n\mid n\in\IN\}=\text{span}\{u_n\mid n\in\IN\}$.
Note also that $P(T)(u_0)=0$.
Therefore, together with Theorem \ref{existenceLawFrechetSpaceGen}, we deduce the following result.

\begin{theorem}\label{existenceLawFrechetSpacePoly}
Let $E$ be a locally bounded or locally convex F-sequence space in which $\text{\emph{span}}\{e_n\mid n\in\IN\}$ is dense. Let $T: E\longrightarrow E$ be a weighted shift and $P(z)=\sum_{k=1}^d a_kz^k$ be a polynomial with $a_1\neq 0$.
Assume that
the series
$\sum_{n\geq 0}u_n$ is unconditionally convergent, where $(u_n)_{n\geq 0}$ is given by Lemma \ref{polynomialShiftBasis}.
Then there exists a random variable X with full support such that the random vector
\[
v:=\sum_{n=0}^{\infty}X_nu_n
\]
is almost surely well-defined and frequently hypercyclic for the operator $P(T)$, and it induces an exact measure with full support for $P(T)$, where $(X_n)_{n\geq 0}$ is a sequence of i.i.d copies of $X$.
\end{theorem}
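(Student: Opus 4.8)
The plan is to reduce the statement directly to Theorem \ref{existenceLawFrechetSpaceGen} applied to the operator $S := P(T)$ and the sequence of vectors $(u_n)_{n\geq 0}$ furnished by Lemma \ref{polynomialShiftBasis}, extended by $u_n := 0$ for $n\leq -1$. The three hypotheses of Theorem \ref{existenceLawFrechetSpaceGen} must be checked in order. First, the shift relation: Lemma \ref{polynomialShiftBasis} gives $P(T)(u_n)=u_{n-1}$ for every $n\geq 1$, and the remark immediately following it records that $P(T)(u_0)=0$; since $u_n=0$ for $n\leq -1$, the identity $S(u_n)=u_{n-1}$ then holds for all $n\in\IZ$. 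Second, density of the span: the excerpt notes that Lemma \ref{polynomialShiftBasis} yields $\text{span}\{e_n\mid n\in\IN\}=\text{span}\{u_n\mid n\in\IN\}$ (each $u_n$ is a finite triangular combination of $e_0,\dots,e_n$ with $\beta_{n,n}\neq 0$, so the change of basis is invertible); since $\text{span}\{e_n\mid n\in\IN\}$ is dense in $E$ by hypothesis on the sequence space, so is $\text{span}\{u_n\mid n\in\IZ\}$. Third, unconditional convergence of $\sum_{n\geq 0}u_n$ in $E$: this is exactly the standing assumption of the theorem.

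With all three hypotheses verified, Theorem \ref{existenceLawFrechetSpaceGen} applies to the operator $S=P(T)$ and produces a random variable $X$ with full support such that $v=\sum_{n=-\infty}^{\infty}X_nu_n=\sum_{n=0}^{\infty}X_nu_n$ (the negative terms vanish) is almost surely well-defined, is almost surely frequently hypercyclic for $S=P(T)$, and induces a strongly mixing measure with full support for $P(T)$. Moreover, since $u_n=0$ for all $n\leq -1$, the final clause of Theorem \ref{existenceLawFrechetSpaceGen} upgrades "strongly mixing" to "exact", which is precisely the conclusion claimed. One only needs to observe that $P(T)$ is indeed a continuous linear operator on $E$: it is a polynomial (with zero constant term) in the continuous linear operator $T$, hence continuous and linear, so it qualifies as an operator in the sense used throughout the paper.

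The main point requiring a little care — though it is entirely routine — is the identification $\text{span}\{u_n\mid n\in\IN\}=\text{span}\{e_n\mid n\in\IN\}$ as subsets of $E$ rather than merely of $\IK^{\IN}$: one must note that the finitely-supported vectors $u_n$ do lie in $E$ (each is a finite linear combination of the $e_j\in E$) and that the triangular system expressing the $e_j$ in terms of the $u_n$ has finite support as well, so no convergence issue arises. Beyond that, there is no genuine obstacle: the theorem is a clean corollary obtained by feeding the data of Lemma \ref{polynomialShiftBasis} into Theorem \ref{existenceLawFrechetSpaceGen}.
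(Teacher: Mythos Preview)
Your proposal is correct and matches the paper's approach exactly: the paper simply states that the theorem follows from Lemma \ref{polynomialShiftBasis} together with Theorem \ref{existenceLawFrechetSpaceGen}, and you have spelled out precisely this verification of the hypotheses (shift relation, density of the span via $\text{span}\{u_n\}=\text{span}\{e_n\}$, unconditional convergence, and the exactness clause from $u_n=0$ for $n\leq -1$).
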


This result improves and generalizes the qualitative part of \cite[Theorem 4.3]{MouzeMunnier2014}; its quantitative part is contained in Theorem \ref{mainTheoremFHC}.

\subsection{Operators satisfying the Frequent Hypercyclicity Criterion}

First recall the Frequent Hypercyclicity Criterion, see \cite[Theorem 2.1]{BonillaGrosse-Erdmann2007}.

\begin{theorem}[Frequent Hypercyclicity Criterion]\label{prerequisitesFHC}
Let $T$ be an operator on a separable $F$-space $E$. Assume that there exists a dense subset $E_0$ of $E$ and a map $S : E_0\longrightarrow E_0$ such that for any $x\in E_0$, the following conditions hold:
\begin{enumerate}[label=(\roman*)]
\item $\sum_{n\geq 0}T^n(x)$ is unconditionally convergent,
\item $\sum_{n\geq 0}S^n(x)$ is unconditionally convergent,
\item $TS(x)=x$.
\end{enumerate}
Then $T$ is frequently hypercyclic.
\end{theorem}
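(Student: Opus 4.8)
The plan is to deduce this from Theorem~\ref{existenceLawFrechetSpaceGen} by building, out of the data $E_0$ and $S$, a single $T$-orbit sequence $(u_n)_{n\in\IN}$ satisfying the hypotheses of that theorem, namely $T(u_n)=u_{n-1}$, $T(u_0)=0$, unconditional convergence of $\sum_{n\geq0}u_n$, and density of $\mathrm{span}\{u_n\}$. Since one expects the Frequent Hypercyclicity Criterion to yield not merely frequent hypercyclicity but a strongly mixing (indeed, here, typically exact) measure with full support, the real content is producing such a sequence; the conclusion about the invariant measure is then immediate.

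First I would fix a countable dense subset $\{x_k\mid k\geq 1\}$ of $E_0$ (possible since $E$ is separable and $E_0$ is dense), and for each $k$ consider the bi-infinite sequence $n\mapsto S^{n}(x_k)$ for $n\geq 0$ together with $T^{m}(x_k)$ for $m\geq 1$; by conditions (i)--(iii) this gives, for each $k$, a two-sided sequence $(y^{(k)}_n)_{n\in\IZ}$ with $y^{(k)}_n=T^{-n}_S(x_k)$ in the obvious sense, such that $T(y^{(k)}_n)=y^{(k)}_{n-1}$ and both $\sum_{n\geq 0}y^{(k)}_n$ and $\sum_{n\geq 1}y^{(k)}_{-n}$ converge unconditionally. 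The idea is then to amalgamate these countably many chains into one. A clean way: rescale each chain by a small scalar $\varepsilon_k>0$ and ``glue'' them along the nonnegative axis, i.e.\ define $u_n$ to run through the terms $\varepsilon_k y^{(k)}_m$ in a single enumeration of $\{(k,m)\mid k\geq 1,\ m\in\IZ\}$ ordered so that the $T$-shift relation is respected --- concretely, concatenate blocks, letting the ``$\geq 0$ part'' of chain $k+1$ begin where the ``$<0$ part'' of chain $k$ has been laid out, so that applying $T$ walks backwards through the whole concatenation and eventually kills everything. One has to be slightly careful that each individual chain is two-sided infinite, so a naive concatenation does not terminate; the fix is to truncate the negative tails: replace $y^{(k)}_n$ for $n\leq -1$ by the finitely many terms needed, which is legitimate because $S^{n}(x_k)\in E_0$ and one only needs $\mathrm{span}\{u_n\}$ dense, not equal to a prescribed set. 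Actually the standard and cleanest route, which I would follow, is the one already implicit in the literature: use that the Frequent Hypercyclicity Criterion chains can be assembled into a \emph{single} unilateral sequence $(u_n)_{n\in\IN}$ with $T(u_n)=u_{n-1}$, $T(u_0)=0$, by the classical ``interlacing'' construction of Bonilla--Grosse-Erdmann, choosing the scaling constants $\varepsilon_k$ small enough (summably) that the unconditional convergence of $\sum_n u_n$ follows from the unconditional convergence of each $\sum_n S^n(x_k)$ together with $\sum_k \varepsilon_k \cdot(\text{tail bounds})<\infty$, and density of $\mathrm{span}\{u_n\}$ follows because each $x_k$ lies in that span.

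Having produced such $(u_n)_{n\in\IN}$ --- with $u_n=0$ for $n\leq -1$, in the convention of Theorem~\ref{existenceLawFrechetSpaceGen} --- I would simply invoke that theorem: there is a random variable $X$ with full support such that $v=\sum_{n\geq 0}X_n u_n$ is almost surely well-defined, frequently hypercyclic for $T$, and induces a measure with full support that is exact (hence strongly mixing, hence ergodic) for $T$. In particular $T$ admits an ergodic invariant probability measure with full support, so $T$ is frequently hypercyclic, recovering (and strengthening) the conclusion of Theorem~\ref{prerequisitesFHC}, and matching the Murillo-Arcila--Peris result \cite{Murillo-ArcilaPeris2013} on the existence of a strongly mixing measure with full support, now with the explicit construction $v=\sum_n X_nu_n$.

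The main obstacle is the amalgamation step: one must verify that the single sequence $(u_n)$ genuinely satisfies $T(u_n)=u_{n-1}$ for all $n\geq 1$ and $T(u_0)=0$ while still having $\sum_n u_n$ unconditionally convergent. The tension is that each criterion-chain is forward-infinite under $S$, so naively the chains never ``start'', and one needs the $T$-side unconditional convergence (condition (i)) to control the forward tails and the choice of summable scalars $\varepsilon_k$ to control the sum over chains; keeping the unconditional (not merely absolute) convergence through the rearrangement inherent in the interlacing is the delicate point, handled via the Cauchy criterion for unconditional convergence (\cite[Theorems 3.3.8 and 3.3.9]{KamthanGupta1981}) applied uniformly over the blocks. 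Everything after that is a direct citation of Theorem~\ref{existenceLawFrechetSpaceGen}.
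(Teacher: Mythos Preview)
Your overall strategy---build from the criterion data a sequence $(u_n)$ with $T(u_n)=u_{n-1}$ and apply Theorem~\ref{existenceLawFrechetSpaceGen}---is exactly what the paper does in Theorem~\ref{existenceLawFrechetSpaceFHC} (the paper does not prove Theorem~\ref{prerequisitesFHC} itself; it is recalled from \cite{BonillaGrosse-Erdmann2007}, but Theorem~\ref{existenceLawFrechetSpaceFHC} yields it as a by-product). The gap is your insistence on a \emph{unilateral} sequence with $T(u_0)=0$. The criterion provides no vectors in $\ker T$: condition~(i) only says $T^n(x)\to 0$, not that $T^n(x)=0$ eventually. Your ``concatenation of truncated chains'' cannot maintain $T(u_n)=u_{n-1}$ at the junctions, since if block~$k$ ends with $\varepsilon_kT^{M_k}(x_k)$ and block~$k{+}1$ begins with $\varepsilon_{k+1}x_{k+1}$, applying $T$ to the latter gives $\varepsilon_{k+1}T(x_{k+1})$, which has no reason to equal the former. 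No choice of scalars repairs this mismatch, and the ``interlacing construction of Bonilla--Grosse-Erdmann'' you invoke builds a frequently hypercyclic vector directly, not a shift-compatible sequence terminating in $\ker T$.

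The paper's construction is therefore \emph{bilateral}. It first produces, via Lemma~\ref{existenceLawFrechetSpaceFHCLemma}, a supercyclic vector $x=\sum_{k\geq 1}a_kS^{n_k}(x_k)$, then sets $u_n=\sum_{k\geq 1}a_kS^{n_k+n}(x_k)$ for $n\geq 0$ and $u_{-n}=T^n(x)$ for $n\geq 1$. This two-sided sequence satisfies $T(u_n)=u_{n-1}$ for all $n\in\IZ$; density of $\mathrm{span}\{u_n\mid n\in\IZ\}$ follows from the supercyclicity of $x$ (note that your density claim, that ``each $x_k$ lies in the span'', is not justified once the $u_n$ are infinite superpositions), and unconditional convergence of $\sum_{n\in\IZ}u_n$ is checked using (i)--(iii) together with the summable choice of the $a_k$. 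One then obtains a strongly mixing measure with full support, but not in general an exact one---consistent with the remark after Theorem~\ref{existenceLawFrechetSpaceBilateralWeightedShift} that invertible operators never admit an exact invariant probability measure.
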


In \cite[Theorem 1]{Murillo-ArcilaPeris2013}, Murillo and Peris proved that every operator satisfying the Frequent Hypercyclicity Criterion has a strongly mixing invariant measure with full support. They used the Bernoulli shift on a subset of $\IN^{\IZ}$ to construct such a measure. In \cite[Proposition 8.1]{BayartMatheron2016}, it is even shown that such operators admit a strongly mixing Gaussian measure.
We will show here the existence of a strongly mixing measure with full support as the distribution of some random vector $\sum_{n\in\IZ}X_nu_n$.
We will need the next lemma. Its proof is contained in the proof of \cite[Lemma 3.2]{Grivaux2006}. In this subsection, $E$ will be again a locally bounded or locally convex separable F-space.

Recall that a vector $x\in E$ is \emph{supercyclic} for an operator $T:E \longrightarrow E$ if the set $\{\lambda T^n(x)\mid n\geq 0, \lambda\in\IK\}$ is dense in $E$.

\begin{lemma}\label{existenceLawFrechetSpaceFHCLemma}
Let $T$ be an operator on $E$ satisfying the Frequent Hypercyclicity Criterion and let $S$ and $E_0$ be respectively the map and dense set given by that criterion. Let $(a_k)_{k\geq 1}$ be a sequence of non-zero scalars. If $(x_k)_{k\geq 1}$ is a dense sequence in $E_0$ then there exists an increasing sequence $(n_k)_{k\geq 1}$ of positive integers such that the vector $x:=\sum_{k\geq 1}a_kS^{n_k}(x_k)$ is well-defined and supercyclic for $T$.
\end{lemma}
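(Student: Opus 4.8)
The plan is to choose the integers $n_k$ one at a time, very sparse, so that at stage $k$ we simultaneously gain something about supercyclicity and keep the tail of the series small enough to converge. The key observation, which is exactly what the Frequent Hypercyclicity Criterion buys us, is the following: for any $x \in E_0$ and any $N \geq 1$, the partial sums of $\sum_{j \geq 0} T^j(x)$ and $\sum_{j \geq 0} S^j(x)$ are Cauchy, so for a fixed F-norm $\|\cdot\|$ defining the topology of $E$ and any $\eta > 0$ there is an index beyond which all ``blocks'' $\|\sum_{j \in F} \alpha_j S^j(x)\|$ (over finite $F$ with $\min F$ large, $|\alpha_j| \leq 1$) are $< \eta$, and similarly for $T$. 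The same holds for any finite linear combination of elements of $E_0$, and $TS = \mathrm{Id}$ on $E_0$ so $T^i S^j(x_k) = S^{j-i}(x_k)$ for $j \geq i$ and $= T^{i-j}(x_k)$ for $i \geq j$.

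First I would fix an F-norm $\|\cdot\|$ on $E$ and a countable family of ``targets'': since $E$ is separable, pick a dense sequence $(y_\ell)_{\ell \geq 1}$ in $E$, and for supercyclicity it suffices to arrange that for every $\ell$ and every $\varepsilon > 0$ there are infinitely many $n$ and scalars $\lambda$ with $\|\lambda T^n(x) - y_\ell\| < \varepsilon$; using a suitable enumeration of pairs $(\ell, m)$ with $\varepsilon = 1/m$, it is enough to hit each target once along the construction. Now build $(n_k)$ inductively. Having chosen $n_1 < \dots < n_{k-1}$, I would do two things when choosing $n_k$. (a) \emph{Convergence control:} using unconditional convergence of $\sum_j S^j(x_k)$ (and of all the finitely many series already in play), choose $n_k$ large enough that $\|\sum_{j \in F}\alpha_j S^j(x_k)\| < 2^{-k}$ for all finite $F$ with $\min F \geq n_k - n_{k-1}$ say, $|\alpha_j|\le 1$, and also that $|a_k|\, \|S^{n_k}(x_k)\|$ together with all later contributions stays summable; the point is that $x = \sum_k a_k S^{n_k}(x_k)$ then converges in $E$ by a Cauchy argument and, moreover, for each fixed $i$, the image $T^i(x) = \sum_k a_k T^i S^{n_k}(x_k) = \sum_k a_k S^{n_k - i}(x_k)$ is legitimate because eventually $n_k - i \geq 0$ and the tails are still controlled. (b) \emph{Approximation:} if stage $k$ is responsible for target $(y_\ell, 1/m)$, I would also require $n_k$ large enough that the ``past part'' $\sum_{k' < k} a_{k'} T^{n_k} S^{n_{k'}}(x_{k'}) = \sum_{k' < k} a_{k'} T^{n_k - n_{k'}}(x_{k'})$ has norm $< \tfrac{1}{3m}$ (possible since $\sum_j T^j(\cdot)$ converges unconditionally, so its far tails are small), and that the ``future part'' $\sum_{k' > k} a_{k'} T^{n_k} S^{n_{k'}}(x_{k'}) = \sum_{k' > k} a_{k'} S^{n_{k'} - n_k}(x_{k'})$ can be forced $< \tfrac{1}{3m}$ by the still-pending choices of $n_{k'}$ for $k' > k$ (this is why (a) must be imposed uniformly enough). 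Then $T^{n_k}(x) \approx a_k x_k$, so choosing $\lambda = 1/a_k$ gives $\|\lambda T^{n_k}(x) - x_k\|$ small, and since $(x_k)$ is dense in $E_0$ which is dense in $E$, picking $x_k$ within $\tfrac{1}{3m}$ of $y_\ell$ yields $\|\lambda T^{n_k}(x) - y_\ell\| < 1/m$.

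The main obstacle is the bookkeeping in (b): the ``future part'' $\sum_{k' > k} a_{k'} S^{n_{k'} - n_k}(x_{k'})$ depends on integers not yet chosen, so the estimate must be set up as a standing commitment — e.g.\ demand at \emph{every} stage $k'$ that $\|a_{k'} S^{n_{k'} - n_j}(x_{k'})\| < 2^{-k'}/\text{(something)}$ for \emph{all} $j < k'$ at once, which is feasible because $n_{k'} - n_j \to \infty$ as $k' \to \infty$ for each fixed $j$ and $\sum_j S^j(x_{k'})$ converges unconditionally. Once the three parts (past $T$-tail, main term $a_k x_k$, future $S$-tail) are each below $\tfrac{1}{3m}$, the triangle inequality for the F-norm (using $\|\alpha x\| \leq (1+|\alpha|)\|x\|$ to absorb the scalar $1/a_k$, enlarging $m$ if needed) closes the argument. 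Well-definedness of $x$ and of each $T^i(x)$ follows from the same tail bounds together with continuity of $T$, and supercyclicity follows since every target in a dense subset of $E$ is approximated arbitrarily well by some $\lambda T^{n_k}(x)$.
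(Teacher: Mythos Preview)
Your overall strategy is correct and coincides with the paper's: build $(n_k)$ inductively so that (a) the series $\sum_k a_k S^{n_k}(x_k)$ converges and (b) $\tfrac{1}{a_k}T^{n_k}(x)$ is close to $x_k$, which, since $(x_k)$ is dense, forces supercyclicity. The three-part decomposition of $T^{n_k}(x)$ into a past $T$-tail, the main term $a_k x_k$, and a future $S$-tail is exactly the mechanism at work.

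There is, however, a bookkeeping slip in your setup. You introduce an auxiliary dense sequence $(y_\ell)$ and an enumeration of targets $(y_\ell,1/m)$, declare that ``stage $k$ is responsible for target $(y_\ell,1/m)$'', and then write ``picking $x_k$ within $\tfrac{1}{3m}$ of $y_\ell$''. But the sequence $(x_k)$ is \emph{given} in the hypothesis; you cannot pick it, and under a fixed enumeration of targets there is no reason the $k$-th term $x_k$ should be near the target assigned to stage $k$. This layer is both the source of the error and entirely superfluous: once you have $\|\tfrac{1}{a_k}T^{n_k}(x)-x_k\|$ small for every $k$, density of $(x_k)$ in $E_0$ (hence in $E$) already yields supercyclicity, with no need for a separate target list.

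The paper does precisely this, and it also streamlines your ``standing commitments'' for the future tail. Instead of imposing at stage $k'$ a bound on $a_{k'}S^{n_{k'}-n_j}(x_{k'})$ for all $j<k'$, the paper simply requires at each stage $k$ the single condition
\[
\Big\|\tfrac{1}{a_l}\,T^{n_l}\Big(\sum_{j=1}^{k}a_jS^{n_j}(x_j)\Big)-x_l\Big\|<2^{-l}\qquad\text{for every }1\le l\le k,
\]
together with $\|a_kS^{n_k}(x_k)\|\le 2^{-k}$. For $l<k$ this inequality was strict at stage $k-1$, and the new contribution $\tfrac{a_k}{a_l}S^{n_k-n_l}(x_k)$ can be made as small as needed by taking $n_k$ large; for $l=k$ the expression reduces to $\sum_{j<k}\tfrac{a_j}{a_k}T^{n_k-n_j}(x_j)$, which tends to $0$ with $n_k$. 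Letting $k\to\infty$ in the displayed inequality gives $\|\tfrac{1}{a_l}T^{n_l}(x)-x_l\|\le 2^{-l}$ directly. This packages your past/future control into one inductive line and avoids the target-matching problem altogether.
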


\begin{proof}
By conditions \emph{(i)} and \emph{(ii)} of the Frequent Hypercyclicity Criterion, we know that $(T^n(x))_{n\geq 0}$ and $(S^n(x))_{n\geq 0}$ converge to $0$ for every $x\in E_0$. Therefore, together with \emph{(iii)}, one can construct by induction an increasing sequence of positive integers $(n_k)_{k\geq 1}$ such that
$\|a_k S^{n_k}(x_k)\|\leq\frac{1}{2^{k}}$ for every $k\geq 1$,
and
\[
\bigg\|\frac{1}{a_l}T^{n_l}\Big(\displaystyle\sum_{j=1}^k a_jS^{n_j}(x_j)\Big)-x_l\bigg\|< \frac{1}{2^l}
\text{ for every }1\leq l\leq k,
\]
where $\|.\|$ is an F-norm defining the topology of $E$.
The first condition tells us that $(\sum_{k=1}^s a_kS^{n_k}(x_k))_{s\geq 1}$ is Cauchy in $E$, hence converges. The second condition tells us that the vector $x:=\sum_{k\geq 1}a_kS^{n_k}(x_k)$ is supercyclic for $T$.
\end{proof}

\begin{theorem}\label{existenceLawFrechetSpaceFHC}
Let $T$ be an operator on $E$ satisfying the Frequent Hypercyclicity Criterion.
Then there exists a supercyclic vector $x$ for $T$, a sequence $(u_n)_{n\geq 0}$ in $E$ with $u_0=x$ and $T(u_n)=u_{n-1}$ for every $n\geq 1$, and a random variable X with full support such that the random vector
\[
v:=\sum_{n=0}^{\infty}X_nT^n(x)+\sum_{n=1}^{\infty}X_{-n}u_n
\]
is almost surely well-defined and frequently hypercyclic for the operator $T$, and it induces a strongly mixing measure with full support for $T$, where $(X_n)_{n\in\IZ}$ is a sequence of i.i.d copies of $X$.
\end{theorem}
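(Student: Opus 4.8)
The plan is to reduce the statement to Theorem~\ref{existenceLawFrechetSpaceGen}. I would build a supercyclic vector $x$ for $T$ together with a two-sided sequence $(U_m)_{m\in\IZ}$ in $E$ satisfying $U_0=x$, $T(U_m)=U_{m-1}$ for all $m\in\IZ$, with $\mathrm{span}\{U_m\mid m\in\IZ\}$ dense in $E$ and $\sum_{m\in\IZ}U_m$ unconditionally convergent. Theorem~\ref{existenceLawFrechetSpaceGen} would then hand me a random variable $X$ with full support for which $\sum_{m\in\IZ}X_mU_m$ is almost surely well-defined, frequently hypercyclic, and induces a strongly mixing measure with full support for $T$. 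Setting $u_n:=U_n$ for $n\ge0$, one checks $U_{-n}=T^n(x)$ for $n\ge1$, so the random vector displayed in the statement equals $\sum_{n\ge0}X_nU_{-n}+\sum_{n\ge1}X_{-n}U_n=\sum_{m\in\IZ}X_mU_{-m}$; reindexing $m\mapsto-m$ and using that $(X_m)_{m\in\IZ}$ is i.i.d., this has the same distribution as $\sum_{m\in\IZ}X_mU_m$, hence inherits all the asserted properties. Note that the forward orbit $T^n(x)$ must be attached to the \emph{nonpositive} indices for $T(U_m)=U_{m-1}$ to hold, which is why the relabelling is needed.

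Here is the construction. Fix a sequence $(x_k)_{k\ge1}$ dense in $E_0$. For $k\ge1$ set $v^k_{-j}:=S^j(x_k)$ for $j\ge0$ and $v^k_j:=T^j(x_k)$ for $j\ge1$, so that $v^k_0=x_k$ and, since $TS=\mathrm{id}$ on $E_0$ (Theorem~\ref{prerequisitesFHC}), $T(v^k_l)=v^k_{l+1}$ for every $l\in\IZ$. By conditions (i) and (ii) of the Frequent Hypercyclicity Criterion, $\sum_{j\ge0}T^j(x_k)$ and $\sum_{j\ge0}S^j(x_k)$ are unconditionally convergent, hence so is the bilateral series $\Sigma_k:=\sum_{l\in\IZ}v^k_l$; in particular the set $B_k$ of its bounded-multiplier partial sums $\{\sum_{l\in F}\gamma_lv^k_l\mid F\subseteq\IZ\text{ finite},\ |\gamma_l|\le1\}$ is a bounded subset of $E$ (here local convexity or local boundedness of $E$ is used). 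The key bookkeeping point is that $\Sigma_k$ and $B_k$ depend only on $x_k$, so I choose a nonzero scalar $a_k$ with $\|a_kb\|\le2^{-k}$ for all $b\in B_k$, where $\|\cdot\|$ is an F-norm for $E$, \emph{before} invoking Lemma~\ref{existenceLawFrechetSpaceFHCLemma}. Applying that lemma to $(x_k)_{k\ge1}$ and $(a_k)_{k\ge1}$ yields an increasing sequence $(n_k)_{k\ge1}$ of positive integers making $x:=\sum_{k\ge1}a_kS^{n_k}(x_k)$ well-defined and supercyclic for $T$. I then define
\[
U_m:=\sum_{k\ge1}a_kv^k_{-m-n_k}\qquad(m\in\IZ),
\]
which converges since $\|a_kv^k_{-m-n_k}\|\le2^{-k}$. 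One checks directly that $U_0=\sum_k a_kS^{n_k}(x_k)=x$, that $U_m=\sum_k a_kS^{m+n_k}(x_k)$ for $m\ge0$, and that $U_{-n}=\sum_k a_kT^nS^{n_k}(x_k)=T^n(x)$ for $n\ge1$ by continuity of $T^n$; and $T(U_m)=U_{m-1}$ follows from $T(v^k_l)=v^k_{l+1}$ and continuity of $T$. Density of $\mathrm{span}\{U_m\mid m\in\IZ\}$ is immediate, since this span contains $\mathrm{span}\{T^n(x)\mid n\ge0\}$ and $x$ is supercyclic.

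The remaining and central point is the unconditional convergence of $\sum_{m\in\IZ}U_m$. I would first show the countable family $(a_kv^k_l)_{k\ge1,\,l\in\IZ}$ is unconditionally convergent via the bounded-multiplier Cauchy criterion: given $\eta>0$, choose $K$ with $\sum_{k>K}2^{-k}<\eta/2$; for each $k\le K$ pick, using unconditional convergence of $\Sigma_k$, a finite $G_k\subseteq\IZ$ such that $\|a_k\sum_{l\in F'}\gamma_lv^k_l\|<\eta/(2K)$ whenever $F'\subseteq\IZ\setminus G_k$ is finite with $|\gamma_l|\le1$, and let the exceptional set be $\bigcup_{k\le K}\{k\}\times G_k$. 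For a finite set $F$ of pairs avoiding this set and $|\alpha_{k,l}|\le1$, splitting according to $k\le K$ versus $k>K$ and bounding the $k>K$ part by $\|a_k\sum_{l:(k,l)\in F}\alpha_{k,l}v^k_l\|\le2^{-k}$ (as this sum lies in $a_kB_k$), one gets $\|\sum_{(k,l)\in F}\alpha_{k,l}a_kv^k_l\|<\eta$. Since, for fixed $k$, $m\mapsto-m-n_k$ is a bijection of $\IZ$, the family $(a_kv^k_{-m-n_k})_{k\ge1,\,m\in\IZ}$ is a reindexing of the previous one, hence unconditionally convergent; grouping its terms with a common $m$ (permissible for unconditionally convergent families) gives exactly $\sum_{m\in\IZ}U_m$, which is therefore unconditionally convergent. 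Applying Theorem~\ref{existenceLawFrechetSpaceGen} to $(U_m)_{m\in\IZ}$ and translating back as in the first paragraph finishes the argument.

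I expect the main obstacle to be precisely this last step together with the ordering of the choices: one must tame each bilateral orbit series $\Sigma_k$ by a scalar $a_k$ \emph{before} the exponents $n_k$ exist, which is legitimate only because $\Sigma_k$ does not involve $n_k$, and then control the resulting double series by isolating finitely many blocks. A variant would avoid Theorem~\ref{existenceLawFrechetSpaceGen} and feed an explicit majorant $(\delta_m)_{m\in\IZ}$ straight into Theorem~\ref{mainTheoremFHC}.
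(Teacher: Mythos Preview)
Your proposal is correct and follows essentially the same approach as the paper: choose the scalars $a_k$ first (using only the unconditional convergence of the bilateral orbit series of $x_k$), then invoke Lemma~\ref{existenceLawFrechetSpaceFHCLemma} to obtain the $n_k$ and the supercyclic $x$, define $U_m$ (the paper's $u_m$) accordingly, prove $\sum_m U_m$ is unconditionally convergent by splitting into $k\le K$ (handled by unconditional convergence of each $\Sigma_k$) and $k>K$ (handled by the $2^{-k}$ bound), and finish with Theorem~\ref{existenceLawFrechetSpaceGen} after the same reindexing $(X_n)\mapsto(X_{-n})$. The only cosmetic difference is that you package the convergence step via unconditionally summable double families and grouping, whereas the paper writes out the tail estimates directly.
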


\begin{proof}
Let $S$ be the map and $E_0$ the dense set given by the Frequent Hypercyclicity Criterion and
let $\|.\|$ be an F-norm defining the topology of $E$.
Let $(x_k)_{k\geq 1}$ be a dense sequence in $E_0$.

For each $k\geq 1$, choose a real number $0<a_k<1$ such that
\begin{equation}\label{existenceLawFrechetSpaceFHCineq1}
\sup_{F\subseteq\IN,\text{ }F\text{ finite}}\bigg\|\sum_{n\in F}a_kS^{n}(x_k)\bigg\|\leq\frac{1}{2^k}
\end{equation}
and
\begin{equation}\label{existenceLawFrechetSpaceFHCineq2}
\sup_{F\subseteq\IN,\text{ }F\text{ finite}}\bigg\|\sum_{n\in F}a_kT^{n}(x_k)\bigg\|\leq\frac{1}{2^k}.
\end{equation}
This is possible by \emph{(i)} and \emph{(ii)} of the Frequent Hypercyclicity Criterion.
Indeed, by unconditional convergence and \cite[Theorems 3.3.8 and 3.3.9]{KamthanGupta1981}, there exists $N\geq 1$ such that
$
\|\sum_{n\in F}a_kS^{n}(x_k)\|\leq 2^{-k-1}
$
whenever $\min F\geq N$ and $|a_k|\leq 1$, and by continuity one can choose $a_k>0$ small enough to get
$
\|\sum_{n\in F}a_kS^{n}(x_k)\|\leq 2^{-k-1}
$
whenever $\max F\leq N$. The same arguments hold for the second inequality.

Now let $(n_k)_{k\geq 1}$ be the sequence given by Lemma \ref{existenceLawFrechetSpaceFHCLemma} and define the vector $x:=\sum_{k\geq 1}a_kS^{n_k}(x_k)$.
If $n\geq 0$, by the triangle inequality we have by \eqref{existenceLawFrechetSpaceFHCineq1}, for every $M\geq N\geq 1$,
\[
\Bigg\|\sum_{k=N}^Ma_kS^{n_k+n}(x_k)\Bigg\|
\leq\sum_{k=N}^M\|a_kS^{n_k+n}(x_k)\|
\leq\sum_{k=N}^M\frac{1}{2^k}
\]
and hence
\[
u_n:=\sum_{k\geq 1}a_kS^{n_k+n}x_k,
\]
$n\geq 0$, is well-defined where $u_0=x$. We also set $u_n=T^{-n}(x)$, $n\leq -1$. It is then easy to check that $T(u_n)=u_{n-1}$ for every $n\in\IZ$.
We will apply Theorem \ref{existenceLawFrechetSpaceGen} to $(u_n)_{n\in\IZ}$. For the statement of the theorem, we then replace $(X_n)_{n\in\IZ}$ by $(X_{-n})_{n\in\IZ}$. Note that $\text{span}\{u_n\mid n\in\IZ\}$ is dense in $E$ since $x$ is supercyclic for $T$.

Thus it remains to show that $\sum_{n\in\IZ}u_n$ is unconditionally convergent. Let $\varepsilon>0$ and let $k_0\geq 1$ be such that $\sum_{k\geq k_0+1}2^{1-k}\leq\varepsilon$. For each $k\geq 1$, by \emph{(i)} and \emph{(ii)} of the Frequent Hypercyclicity Criterion, there exists $N_k\geq 1$ such that
\[
\bigg\|\sum_{n\in F}a_kT^{n}(x_k)\bigg\|<\frac{\varepsilon}{k_0}
\quad\text{and}\quad
\bigg\|\sum_{n\in F}a_kS^{n}(x_k)\bigg\|<\frac{\varepsilon}{k_0}
\]
for every finite set $F\subseteq\IN$ with $\min F\geq N_k$.
Let $F\subseteq\IN$ be a finite subset with $\min F \geq \max_{1\leq k\leq k_0}(N_k+n_k)$. We have
\begin{align*}
\sum_{n\in F}u_{-n}
&=\sum_{n\in F}\sum_{k\geq 1}a_kT^nS^{n_k}(x_k)
=\sum_{k\geq 1}\sum_{n\in F}a_kT^nS^{n_k}(x_k)\\
&=\sum_{k=1}^{k_0}\sum_{n\in F}a_kT^{n-n_k}(x_k)
+\sum_{k\geq k_0+1}\sum_{n\in F}a_kT^nS^{n_k}(x_k).
\end{align*}
The first term is smaller than $\varepsilon$ with respect to $\|.\|$ since $\min F\geq N_k+n_k$ for each $1\leq k\leq k_0$. The triangle inequality, inequalities \eqref{existenceLawFrechetSpaceFHCineq1} and \eqref{existenceLawFrechetSpaceFHCineq2} and condition \emph{(iii)} of the Frequent Hypercyclicity Criterion yield
\begin{align*}
\bigg\|\sum_{k\geq k_0+1}&\sum_{n\in F}a_kT^nS^{n_k}(x_k)\bigg\|
\leq\sum_{k\geq k_0+1}\bigg\|\sum_{n\in F}a_kT^nS^{n_k}(x_k)\bigg\|	\\
&\leq\sum_{k\geq k_0+1}\bigg(\Big\|\sum_{n\in F, n<n_k}a_kS^{n_k-n}(x_k)\Big\|
+\Big\|\sum_{n\in F, n\geq n_k}a_kT^{n-n_k}(x_k)\Big\|\bigg)\\
&\leq \sum_{k\geq k_0+1}\frac{2}{2^k}.
\end{align*}
By definition of $k_0$, we finally get $\|\sum_{n\in F}u_{-n}\|\leq 2\varepsilon$. This shows the unconditional convergence of $\sum_{n\leq 0}u_n$.

Again by the triangle inequality and \eqref{existenceLawFrechetSpaceFHCineq1}, we also have
\begin{align*}
\left\|\sum_{n\in F}u_{n}\right\|
&=\bigg\|\sum_{n\in F}\sum_{k\geq 1}a_kS^{n_k+n}(x_k)\bigg\|\\
&\leq\sum_{k=1}^{k_0}\left\|\sum_{n\in F}a_kS^{n_k+n}(x_k)\right\|
+\sum_{k\geq k_0+1}\left\|\sum_{n\in F}a_kS^{n_k+n}(x_k)\right\|\\
&\leq\sum_{k=1}^{k_0}\left\|\sum_{n\in F}a_kS^{n_k+n}(x_k)\right\|
+\sum_{k\geq k_0+1}\frac{1}{2^k}.
\end{align*}
As before, the first term is smaller than $\varepsilon$ since $\min F\geq N_k$ for each $1\leq k\leq k_0$, and the second term is smaller than $\varepsilon$ by definition of $k_0$. This shows the unconditional convergence of $\sum_{n\geq 0}u_n$.
\end{proof}

\paragraph*{Acknowledgements.}
The author would like to thank Karl Grosse-Erdmann for his valuable advice and numerous readings of the manuscript. He is also grateful to Antoni L\'{o}pez-Mart\'{\i}nez for pointing out that unconditional convergence is not necessarily equivalent to bounded multiplier convergence in arbitrary F-spaces.

\bibliographystyle{plain}
\bibliography{amain}
\nocite{*}

Département de Mathématique, Université de Mons, 20 Place du Parc, 7000 Mons, Belgium

E-mail address: kevin.agneessens@umons.ac.be

\end{document}